\title[Complexity of finding stationary points in one dimension]{On the complexity of finding stationary points of smooth functions in one dimension}
\def\set@curr@file#1{\def\@curr@file{#1}} 
\begin{document}

\maketitle

\begin{abstract}
    We characterize the query complexity of finding stationary points of one-dimensional non-convex but smooth functions.
    We consider four settings, based on whether the algorithms under consideration are deterministic or randomized, and whether the oracle outputs $1^{\rm st}$-order or both $0^{\rm th}$- and $1^{\rm st}$-order information.
   Our results show that algorithms for this task provably benefit by incorporating either randomness or $0^{\rm th}$-order information. Our results also show that, for every dimension $d \geq 1$, gradient descent is optimal among deterministic algorithms using $1^{\rm st}$-order queries only.
\end{abstract}

\begin{keywords}
gradient descent, non-convex optimization, oracle complexity, stationary point
\end{keywords}



\section{Introduction}

We consider optimizing a non-convex but smooth function $f : \R^d\to\R$, a task which underlies the spectacular successes of modern machine learning.
Despite the fundamental nature of this question, there are still important aspects which remain poorly understood.

To set the stage for our investigation, let $f : \R^d\to\R$ be a $\beta$-smooth function with bounded objective gap: $f(0) - \inf f \le \Delta$.
Since global minimization of $f$ is, in general, computationally intractable~\citep[c.f.][]{nemirovskyyudin1983complexity}, we focus on the task of outputting an \emph{$\varepsilon$-stationary point}, that is, a point $x^\star\in\R^d$ such that $\norm{\nabla f(x^\star)} < \varepsilon$.
By a standard rescaling argument (see Lemma~\ref{lem:reduction}), it suffices to consider the case $\beta = \Delta = 1$.
Then, it is well-known~\citep[see, e.g.,][]{nesterov2018cvxopt}, that the standard gradient descent (GD) algorithm solves this task in $O(1/\varepsilon^2)$ queries to an oracle for the gradient $\nabla f$.
Conversely,~\cite{carmonetal2020stationarypt} proved that if the dimension $d$ is sufficiently large, then any randomized algorithm for this task must use at least $\Omega(1/\varepsilon^2)$ queries to a local oracle for $f$, thereby establishing the optimality of GD in high dimension.

However, the \emph{low-dimensional complexity} of computing stationary points remains open.
Indeed, the main limitation of~\citet{carmonetal2020stationarypt} is that their lower bound constructions require the ambient dimension to be large: more precisely, they require $d \ge \Omega(1/\varepsilon^2)$ for deterministic algorithms, and $d \ge \widetilde \Omega(1/\varepsilon^4)$ for randomized algorithms.
The large dimensionality arises because they adapt to the non-convex smooth setting a ``chain-like'' lower bound construction for optimization of a convex non-smooth function~\citep{nesterov2018cvxopt}.
The chain-like construction forces certain natural classes of iterative algorithms to explore only one new dimension per iteration, and hence the dimension of the ``hard'' function in the construction is at least as large as the iteration complexity.

In fact, the non-convex and smooth setting shares interesting parallels with the convex and non-smooth setting, despite their apparent differences (in the former setting, we seek an $\varepsilon$-stationary point, whereas in the latter setting, we seek an $\varepsilon$-minimizer).
Namely, in both settings the optimal oracle complexity is $\Theta(1/\varepsilon^2)$ in high dimension, and the optimal algorithm is (sub)gradient descent (as opposed to the convex smooth setting, for which accelerated gradient methods outperform GD).
However, for the convex non-smooth setting, we know that the large dimensionality $d\ge \Omega(1/\varepsilon^2)$ of the lower bound construction is almost necessary, because of the existence of cutting-plane methods~\citep[see, e.g.,][]{bubeck2015convex, nesterov2018cvxopt} which achieve a better complexity of $O(d\log(1/\varepsilon))$ in dimension $d \le \widetilde O(1/\varepsilon^2)$.
This raises the question of whether or not there exist analogues of cutting-plane methods for \emph{non-convex} optimization.

A negative answer to this question would substantially improve our understanding of non-convex optimization, as it would point towards fundamental algorithmic obstructions.
As such, the low-dimensional complexity of finding stationary points for non-convex optimization was investigated in a series of works~\citep{vavavis1993complexity, hinder2018cuttingplane, bubmik2020gradientflow}.
These results show the existence of algorithms which improve upon GD in dimension $d \le O(\log(1/\varepsilon))$.
This suggests that GD is actually optimal for all $d \geq \Omega(\log(1/\varepsilon))$.
To date, there has been little progress on this tantalizing conjecture because the existing low-dimensional lower bounds are delicate, relying on the theory of unpredictable random walks~\citep{vavavis1993complexity, benpemper1998unpredictable, bubmik2020gradientflow}.

\begin{table}[h!]
    \centering
    \begin{tabular}{ccccc}
        \textbf{Algorithm Class} & \textbf{Oracle} & \textbf{Complexity} & \textbf{Lower Bound} & \textbf{Upper Bound} \\
        \midrule
        Deterministic & $1^{\rm st}$ & $\Theta(1/\varepsilon^2)$ & Theorem~\ref{thm:det_lower} & GD (well-known) \\
        Randomized & $1^{\rm st}$ & $\Theta(1/\varepsilon)$ & Theorem~\ref{thm:randomized_lower} & Theorem~\ref{thm:randomized_upper} \\
        Deterministic & $0^{\rm th} + 1^{\rm st}$ & $\Theta(\log(1/\varepsilon))$ & Theorem~\ref{thm:rand_lower_0} & Theorem~\ref{thm:det_upper_0} \\
        Randomized & $0^{\rm th} + 1^{\rm st}$ & $\Theta(\log(1/\varepsilon))$ & Theorem~\ref{thm:rand_lower_0} & Theorem~\ref{thm:det_upper_0}
    \end{tabular}
    \caption{Summary of the results of this work.}\label{table:results}
\end{table}

\paragraph{Our contributions.}
In this paper, we study the task of finding an $\varepsilon$-stationary point of a smooth and univariate function $f : \R\to\R$.
Our results, which are summarized as Table~\ref{table:results}, provide a complete characterization of the oracle complexity of this task in four settings, based on whether or not the algorithm is allowed to use external randomness and whether or not the oracle outputs zeroth-order information. In particular, our lower bounds, which hold in dimension one, also hold in every dimension $d\ge 1$.
In spite of the simplicity of the setting, we can draw a number of interesting conclusions from the results.
\begin{itemize}
    \item \textbf{Optimality of GD for any dimension $d \geq 1$.} Our results imply that, among algorithms which are deterministic and only use first-order queries, GD is optimal in every dimension $d \geq 1$. This was previously known only for $d\ge \Omega(1/\varepsilon^2)$~\citep{carmonetal2020stationarypt}.
    \item \textbf{Separations between algorithm classes and oracles}. Our results exhibit a natural setting in which both randomization and zeroth-order queries provably improve the query complexity of optimization.
        It shows, in particular, that at least one of these additional ingredients is \emph{necessary} to improve upon the basic GD algorithm.
    \item \textbf{Finding stationary points for unconstrained optimization}. The methods of~\citet{vavavis1993complexity, bubmik2020gradientflow} for improving upon the complexity of GD in low dimension are applicable to the constrained case in which the domain of $f$ is the cube ${[0,1]}^d$, and it is not obvious that they can be applied to unbounded domains.
        We address this question by characterizing the oracle complexity for the unconstrained case.
\end{itemize}

\paragraph{Related works.}
Usually, optimization lower bounds are established for specific classes of algorithms, such as algorithms for which each iterate lies in the span of the previous iterates and gradients~\citep{nesterov2018cvxopt}.
As noted in~\cite{woosre2017randomizedfirstorder}, lower bounds against arbitrary randomized algorithms for convex optimization are trickier and are often loose with regards to the dimension in which the construction is embedded.
The complexity of finding stationary points is further studied in~\cite{caretal2021statptii}.

\paragraph{Conventions and notation.}
A function $f : \R^d\to\R$ is \emph{$\beta$-smooth} if it is continuously differentiable and its gradient $\nabla f$ is $\beta$-Lipschitz. If $d=1$, we shall write $f'$ instead of $\nabla f$.
We use the standard asymptotic notation $\Omega(\cdot)$, $O(\cdot)$, and $\Theta(\cdot)$.

\section{Results}

In this section, we give detailed statements of our results as well as proof sketches.
The full proofs are deferred to the appendix.
We also record the following lemma, which allows us to reduce to the case of $\beta = \Delta = 1$.

\begin{lemma}\label{lem:reduction}
    Let $\ms C_*(\varepsilon; \beta,\Delta,d, \ms O) \geq 0$ denote the complexity of finding an $\varepsilon$-stationary point over the class of $\beta$-smooth functions $f : \R^d\to\R$ with $f(0) - \inf f \le \Delta$ using an oracle $\ms O$, where given $x\in\R^d$ the oracle $\ms O$ returns either $\nabla f(x)$ (first-order information) or $(f(x), \nabla f(x))$ (zeroth- and first-order information).
    Here, $* \in \{\text{det}, \text{rand}\}$ is a subscript denoting whether or not the algorithm is allowed to use external randomness; when $* = \text{rand}$, the randomized complexity refers to the minimum number of queries required to find an $\varepsilon$-stationary point with probability at least $1/2$.
    Then, for any $\beta,\Delta,\varepsilon > 0$,
    \begin{align*}
        \ms C_*(\varepsilon; \beta,\Delta,d,\ms O)
        &= \ms C_*\bigl( \frac{\varepsilon}{\sqrt{\beta\Delta}}; 1,1,d, \ms O\bigr)\,.
    \end{align*}
\end{lemma}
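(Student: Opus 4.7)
The plan is to carry out the standard rescaling argument: reduce the class of $\beta$-smooth functions with objective gap at most $\Delta$ to the class of $1$-smooth functions with objective gap at most $1$ by scaling the domain and the codomain appropriately, then track how $\varepsilon$-stationarity transforms under this scaling.

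Concretely, given $f : \R^d \to \R$ that is $\beta$-smooth with $f(0) - \inf f \leq \Delta$, I will define
\[
    g(x) \coloneqq \frac{1}{\Delta}\, f(\alpha x), \qquad \alpha \coloneqq \sqrt{\Delta/\beta}.
\]
A direct computation gives $\nabla g(x) = (\alpha/\Delta)\,\nabla f(\alpha x)$, from which one reads off that $g$ is $(\alpha^2 \beta / \Delta)$-smooth, i.e.\ $1$-smooth with the choice of $\alpha$ above; and clearly $g(0) - \inf g = (f(0) - \inf f)/\Delta \leq 1$. The correspondence at the level of stationarity is then
\[
    \norm{\nabla g(x)} = \frac{\alpha}{\Delta}\,\norm{\nabla f(\alpha x)} = \frac{1}{\sqrt{\beta \Delta}}\, \norm{\nabla f(\alpha x)},
\]
so $x$ is an $(\varepsilon/\sqrt{\beta\Delta})$-stationary point of $g$ if and only if $\alpha x$ is an $\varepsilon$-stationary point of $f$. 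The map $f \mapsto g$ is a bijection between the two function classes (its inverse sends $g$ to $f(y) = \Delta\, g(y/\alpha)$), so this matches up the two problems exactly.

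It remains to observe that the oracles simulate each other with one query per step and no loss. A query to the oracle for $g$ at the point $x$ can be answered by making a single query to the oracle for $f$ at $\alpha x$ and applying the formulas $g(x) = f(\alpha x)/\Delta$ and $\nabla g(x) = (\alpha/\Delta)\nabla f(\alpha x)$; the reverse simulation works symmetrically. Thus any (deterministic or randomized) algorithm that finds an $(\varepsilon/\sqrt{\beta\Delta})$-stationary point of $g$ in $T$ queries can be converted, with the same randomness and the same success probability, into an algorithm that finds an $\varepsilon$-stationary point of $f$ in $T$ queries, and vice versa. Applying this in both directions yields the two inequalities $\ms C_*(\varepsilon;\beta,\Delta,d,\ms O) \leq \ms C_*(\varepsilon/\sqrt{\beta\Delta};1,1,d,\ms O)$ and its reverse, proving the claimed equality.

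There is no real obstacle here: the only thing to be mildly careful about is that the simulation preserves the ``with probability at least $1/2$'' success criterion in the randomized case, but this is immediate because the bijection $f \leftrightarrow g$ pairs up stationary points deterministically, so the success event transfers verbatim between the two processes.
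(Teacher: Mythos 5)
Your proof is correct and follows essentially the same rescaling argument as the paper: you define $g(x) = \Delta^{-1} f(\sqrt{\Delta/\beta}\,x)$, verify $1$-smoothness and the unit objective gap, match up stationary points, and note the oracle simulation and the reversibility of the reduction, exactly as in the paper's proof (just with the computations spelled out in more detail).
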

\begin{proof}
    Given a $\beta$-smooth function $f : \R^d\to\R$ with $f(0) -\inf f \le \Delta$, define $g : \R^d\to\R$ via $g(x) \deq \Delta^{-1} f(\sqrt{\Delta/\beta} \, x)$.
    Then, $g$ is $1$-smooth with $g(0) - \inf g \le 1$, and it is clear that the oracle for $g$ can be simulated using the oracle for $f$.
    Moreover, an $\varepsilon/\sqrt{\beta \Delta}$-stationary point for $g$ translates into an $\varepsilon$-stationary point for $f$.
    Obviously, the reduction is reversible.
\end{proof}

Often, we will assume without loss of generality that $f(0) = 1$ and $\beta = \Delta = 1$, so that $f\ge 0$.
Also, we may assume that $f'(0) \le -\varepsilon$, since if $f'(0) \in (-\varepsilon,\varepsilon)$ then $0$ is an $\varepsilon$-stationary point of $f$, and if $f'(0) \ge \varepsilon$ we can replace $f$ by $x\mapsto f(-x)$.
We abbreviate $\ms C_*(\varepsilon; \ms O) \deq \ms C_*(\varepsilon; 1,1,1,\ms O)$, and from now on we consider $d=1$.

Let $\ms O^{1^{\rm st}}$ denote the oracle which returns first-order information (given $x\in\R$, it outputs $f'(x)$), and let $\ms O^{0^{\rm th}+1^{\rm st}}$ denote the oracle which returns zeroth- and first-order information (given $x\in\R$, it outputs $(f(x), f'(x))$). We remark that in the one-dimensional setting, we could instead assume access to an oracle $\ms O^{0^{\rm th}}$ which only outputs zeroth-order information, rather than $\ms O^{0^{\rm th}+1^{\rm st}}$; this is because we can simulate $\ms O^{1^{\rm st}}$ to arbitrary accuracy given $\ms O^{0^{\rm th}}$ with only a constant factor overhead in the number of oracle queries by using finite differences. For simplicity, we work with $\ms O^{0^{\rm th}+1^{\rm st}}$ and we will not consider $\ms O^{0^{\rm th}}$ further.

\subsection{Lower bound for randomized algorithms}\label{scn:randomized_lower}

We begin with a lower bound construction for randomized algorithms which only use first-order queries.
For simplicity, assume that $1/\varepsilon$ is an integer.
We construct a family of functions ${(f_j)}_{j\in [1/\varepsilon]}$, with the following properties.
On the negative half-line $\R_-$, each $f_j$ decreases with slope $-\varepsilon$, with $f_j(0) = 1$.
We also set the slope of $f_j$ on the positive half-line $\R_+$ to be $-\varepsilon$, but this entails that $f_j(x) < 0$ for $x > 1/\varepsilon$, violating the constraint $f_j(0) - \inf f \le 1$.
Instead, on the interval $[j-1,j]$, we modify $f_j$ to increase as much as possible while remaining $O(1)$-smooth, so that $f_j(1/\varepsilon) = f_j(0) = 1$; we can then periodically extend $f_j$ on the rest of $\R_+$.

Due to the periodicity of the construction, we can restrict our attention to the interval $[0,1/\varepsilon]$.
Without prior knowledge of the index $j$, any algorithm only has a ``probability'' (made precise in Appendix~\ref{scn:pf_randomized_lower}) of at most $\varepsilon$ of finding the interval $[j-1,j]$, which contains all of the $\varepsilon$-stationary points in $[0,1/\varepsilon]$.
Hence, we expect that any randomized algorithm must require at least $\Omega(1/\varepsilon)$ queries to find an $\varepsilon$-stationary point of $f_j$.

To make this formal, let $\Phi : [0,1] \to\R$ be a smooth function such that $\Phi(0) = 0$, $\Phi(1) = 1$, and $\Phi'(0) = \Phi'(1) = -\varepsilon$.
For example, we can take
\begin{align*}
    \Phi(x) = \begin{cases}
        2\,(1+\varepsilon) \, x^2 - \varepsilon\, x\,, & x \in [0, \frac{1}{2}]\,, \\
        2\,\Phi(\frac{1}{2}) - \Phi(1-x)\,, & x \in [\frac{1}{2}, 1]\,.
    \end{cases}
\end{align*}
We can check that $\Phi$ satisfies the desired properties and that $\Phi$ is $\beta$-smooth with $\beta = 4 \, (1+\varepsilon) \le 5$ for $\varepsilon \le \frac{1}{4}$.
Then, let
\begin{align*}
    f_j(x)
    &\deq \begin{cases}
        1-\varepsilon\, x\,, & x \in \openclose{-\infty, j-1}\,, \\
        1-\varepsilon\, (j-1) + (1-\varepsilon)\, \Phi(x-(j-1)) \,, & x \in [j-1, j]\,, \\
        f_j(j) - \varepsilon \, (x - j)\,, & x \in [j, 1/\varepsilon]\,, \\
        f_j(x-1/\varepsilon)\,, & x \in \closeopen{1/\varepsilon,\infty}\,.
    \end{cases}
\end{align*}
It follows that $f_j$ is also $5$-smooth, with $f_j(0) - \inf f_j \le 1$; see Figure~\ref{fig:randomized_lower}.

\begin{figure}[h]
    \centering
    \includegraphics[width=0.3\textwidth]{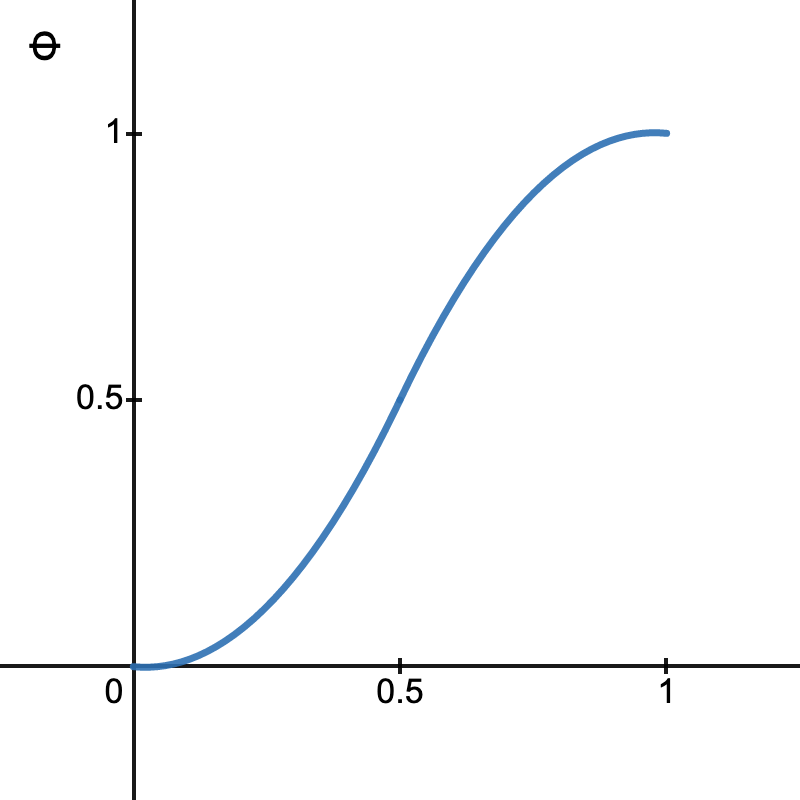}
    \hspace{5em}
    \includegraphics[width=0.3\textwidth]{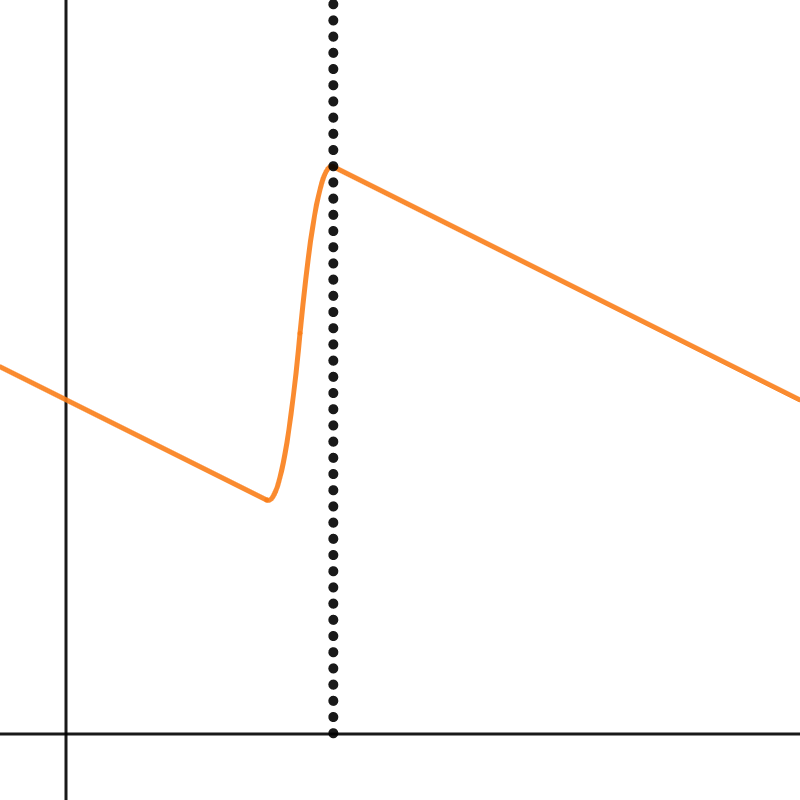}
    \caption{(Left) A plot of $\Phi$. (Right) A plot of $f_j$, where the dotted line indicates the value of $j$.}\label{fig:randomized_lower}
\end{figure}

We prove the following theorem in Appendix~\ref{scn:pf_randomized_lower}.

\begin{theorem}\label{thm:randomized_lower}
    For all $\varepsilon \in (0,\frac{1}{8})$, it holds that
    \begin{align*}
        \ms C_{\rm rand}\bigl(\varepsilon; \ms O^{1^{\rm st}}\bigr) \ge \Omega\Bigl(\frac{1}{\varepsilon}\Bigr)\,.
    \end{align*}
\end{theorem}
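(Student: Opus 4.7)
The plan is to invoke Yao's minimax principle: it suffices to exhibit a distribution over the hard family $\{f_j\}$ against which every deterministic first-order algorithm using $T$ queries fails with probability more than $1/2$, unless $T = \Omega(1/\varepsilon)$. We draw $j$ uniformly from $\{1, 2, \ldots, 1/\varepsilon\}$ (assuming $1/\varepsilon \in \mathbb{Z}$ for simplicity; the general case costs only constants).

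The central observation is that $f_j'(x) = -\varepsilon$ for every $x$ outside the union of ``hills'' $\mathcal{H}_j \deq \bigcup_{k \in \mathbb{Z}} [j-1 + k/\varepsilon,\, j + k/\varepsilon]$, which depends on $j$ only through an offset modulo $1/\varepsilon$. Thus, as long as no query lands in $\mathcal{H}_j$, every oracle answer is $-\varepsilon$ and carries no information about $j$. Fix a deterministic algorithm $\mathcal{A}$ and define its \emph{reference trajectory} $(x_1^\star, \ldots, x_T^\star)$ and reference output $x^\star$ to be the queries and output that $\mathcal{A}$ would produce if every oracle answer were $-\varepsilon$; these depend on $\mathcal{A}$ alone. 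A short induction on $t$ shows that if $x_1^\star, \ldots, x_{t-1}^\star \notin \mathcal{H}_j$, then the first $t-1$ queries of $\mathcal{A}$ on input $f_j$ coincide with the reference trajectory and all receive the answer $-\varepsilon$, so the $t$-th query is also $x_t^\star$. In particular, if no reference query lies in $\mathcal{H}_j$, the output of $\mathcal{A}$ on $f_j$ equals $x^\star$.

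Now, each reference query $x_t^\star$, reduced modulo $1/\varepsilon$, falls in exactly one of the disjoint intervals $[j-1, j)$ for $j \in \{1, \ldots, 1/\varepsilon\}$, so at most $T$ indices $j$ admit a reference query inside $\mathcal{H}_j$. For the remaining $\ge 1/\varepsilon - T$ indices the algorithm outputs the common point $x^\star$. Since $|f_j'| = \varepsilon$ outside $\mathcal{H}_j$, every $\varepsilon$-stationary point of $f_j$ lies in $\mathcal{H}_j$, and the sets $\mathcal{H}_j$ are pairwise disjoint modulo $1/\varepsilon$. Hence $x^\star$ is $\varepsilon$-stationary for at most one $j$, giving
\begin{equation*}
\Pr[\mathcal{A} \text{ succeeds}] \;\le\; \varepsilon \, (T+1)\,,
\end{equation*}
and requiring this to be at least $1/2$ forces $T \ge \Omega(1/\varepsilon)$. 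Since $f_j$ is $5$-smooth rather than $1$-smooth, Lemma~\ref{lem:reduction} converts this into the claimed bound on $\ms C_{\rm rand}(\varepsilon; \ms O^{1^{\rm st}})$ up to the multiplicative constant $\sqrt{5}$.

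The main technical obstacle is formalizing the coupling between $\mathcal{A}$ run on $f_j$ and the reference trajectory in the presence of \emph{adaptive} queries; the inductive argument above is the crux and requires care in defining what it means for the run on $f_j$ to ``agree'' with the reference run step by step. Secondary issues are the mild derivative mismatch at the hill boundaries (absorbed by a small additional smoothing that leaves the argument unchanged) and handling non-integer $1/\varepsilon$, which affects only constants.
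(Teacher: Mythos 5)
Your argument is correct, and it takes a genuinely different route from the paper. The paper reduces to estimating the hidden index $J \sim \unif([1/\varepsilon])$ and applies Fano's inequality, bounding the per-query mutual information by the entropy $h(\tfrac{1}{1/\varepsilon - i}) = O(\varepsilon\log(1/\varepsilon))$ of a near-deterministic oracle response; the $\log(1/\varepsilon)$ factors then cancel inside Fano to give $N \ge \Omega(1/\varepsilon)$. You instead use Yao's minimax principle together with a reference-trajectory (resisting-oracle) coupling: since $f_j' \equiv -\varepsilon$ off the hills, a deterministic algorithm's run on $f_j$ agrees with its all-answers-$(-\varepsilon)$ run until a query lands in $\mathcal{H}_j$, and a direct counting argument bounds the success probability by $O(\varepsilon T)$. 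Your route is more elementary (no information theory) and even gives a cleaner constant, since it avoids the entropy estimates entirely; the paper's Fano machinery has the advantage that the \emph{same} computation is reused verbatim for Theorem~\ref{thm:rand_lower_0}, where only the per-query entropy bound changes from $O(\varepsilon\log(1/\varepsilon))$ to $O(1)$. Two small remarks on your write-up: because the hills $[j-1+k/\varepsilon, j+k/\varepsilon]$ are closed, a query at a shared endpoint lies in two consecutive hills, so the count should read ``at most $2T$ indices'' (immaterial for the bound); and the ``derivative mismatch at the hill boundaries'' you flag as a secondary issue does not actually arise, since the construction enforces $\Phi'(0) = \Phi'(1) = -\varepsilon$ precisely so that $f_j'$ is continuous and equal to $-\varepsilon$ at the hill endpoints --- no additional smoothing is needed.
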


\subsection{An optimal randomized algorithm}

The lower bound construction of the previous section suggests a simple strategy for computing an $\varepsilon$-stationary point of $f$: namely, just repeatedly pick points uniformly at random in the interval $[0,1/\varepsilon]$.
We now show that such a strategy (together with some additional processing steps) succeeds at obtaining an $\varepsilon$-stationary point in $O(1/\varepsilon)$ queries.

\begin{minipage}[t]{0.43\textwidth}
    \centering
    \null{}
    \begin{algorithm}[H]
        \caption{\textsc{RandomSearch}}\label{alg:randomized_upper}
        \KwData{oracle $\ms O^{1^{\rm st}}$ for $f$}
\KwResult{$\varepsilon$-stationary point $x$}
\While{true}{draw $x \sim \unif([0, 2/\varepsilon])$\;

    \uIf{$\abs{f'(x)} < \varepsilon$\;}{output $x$\;}
    \uElseIf{$f'(x) > 0$\;}{call \textsc{BinarySearch}$(\ms O^{1^{\rm st}}, 0, x)$\;}
}
\end{algorithm}
\end{minipage}
\hfill{}
\begin{minipage}[t]{0.47\textwidth}
    \centering
    \null{}
    \begin{algorithm}[H]
        \caption{\textsc{BinarySearch}}\label{alg:binary_search}
        \KwData{oracle $\ms O^{1^{\rm st}}$ for $f$; initial points $x_0 < x_1$ with $f'(x_0) \le -\varepsilon$ and $f'(x_1) > 0$}
\KwResult{$\varepsilon$-stationary point $x$}
set $m \gets \frac{x_0+x_1}{2}$\;

\uIf{$\abs{f'(m)} < \varepsilon$}{output $m$\;}
\uElseIf{$f'(m) \le -\varepsilon$}{call \textsc{BinarySearch}$(\ms O^{1^{\rm st}}, m, x_1)$}
\uElseIf{$f'(m) > 0$}{call \textsc{BinarySearch}$(\ms O^{1^{\rm st}}, x_0, m)$}
\end{algorithm}
\end{minipage}

\bigskip{}

The pseudocode for the algorithms is given as Algorithms~\ref{alg:randomized_upper} and~\ref{alg:binary_search}. In short, \textsc{RandomSearch} (Algorithm~\ref{alg:randomized_upper}) uses $O(1/\varepsilon)$ queries to find a ``good point'', i.e., either an $\varepsilon$-stationary point or a point $x$ with $f'(x) > 0$. In the latter case, \textsc{BinarySearch} (Algorithm~\ref{alg:binary_search}) then locates an $\varepsilon$-stationary point using an additional $O(\log(1/\varepsilon))$ queries.

We prove the following theorem in Appendix~\ref{scn:pf_randomized_upper}.

\begin{theorem}\label{thm:randomized_upper}
    Assume that $f : \R\to\R$ is $1$-smooth, $f \ge 0$, $f(0) = 1$, and $f'(0) \le -\varepsilon$.
    Then, \textsc{RandomSearch} (Algorithm~\ref{alg:randomized_upper}) terminates with an $\varepsilon$-stationary point for $f$ using at most $O(1/\varepsilon)$ queries to the oracle with probability at least $1/2$.
\end{theorem}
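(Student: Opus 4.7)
My plan is to analyze the two subroutines separately and combine their query costs.

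For \textsc{BinarySearch}, I would first check that the recursive invariant $f'(x_0) \le -\varepsilon$, $f'(x_1) > 0$ is preserved in every call: each of the two recursing branches updates exactly one endpoint to $m$ in a sign-compatible way. Since $f'$ is $1$-Lipschitz (from $1$-smoothness of $f$), once the interval width $x_1 - x_0 < 2\varepsilon$, the intermediate value theorem produces a zero $c \in (x_0, x_1)$ of $f'$, so the midpoint $m$ satisfies $\abs{f'(m)} = \abs{f'(m) - f'(c)} \le \abs{m - c} < \varepsilon$ and the routine halts. Since the width halves each step and is initially at most $2/\varepsilon$, termination occurs within $O(\log(1/\varepsilon))$ queries.

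For \textsc{RandomSearch}, the heart of the argument is to show that a uniform sample $x \sim \unif([0, 2/\varepsilon])$ satisfies $f'(x) > -\varepsilon$ (which is exactly the condition for the \textbf{while} loop to exit, either via direct output or via \textsc{BinarySearch}) with probability $\Omega(\varepsilon)$; equivalently, $\abs{G} \ge \Omega(1)$ where $G \deq \{x \in [0, 2/\varepsilon] : f'(x) > -\varepsilon\}$. The trick I would use is to introduce $g(x) \deq f(x) + \varepsilon x$, so that $G = \{g' > 0\}$, and exploit $f \ge 0$ together with $f(0) = 1$:
\begin{align*}
    \int_0^{2/\varepsilon} g'(x)\, dx = g(2/\varepsilon) - g(0) = f(2/\varepsilon) + 2 - 1 \ge 1\,.
\end{align*}
Since $g' \le 0$ off $G$, this forces $\int_G g' \ge 1$. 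Writing $G$ as a disjoint union of open intervals $(a_i, b_i) \subseteq [0, 2/\varepsilon]$, continuity of $g'$ gives $g'(a_i) = 0$ at each interior left endpoint (and symmetrically for right endpoints), after which the $1$-Lipschitzness of $g'$ caps $g'(x) \le x - a_i$ on the $i$-th component. Integrating yields $\int_G g' \le \tfrac{1}{2} \sum_i (b_i - a_i)^2 \le \tfrac{1}{2}\,\abs{G}^2$, using $\sum_i x_i^2 \le \bigl(\sum_i x_i\bigr)^2$ for nonnegative $x_i$. Combining the two bounds gives $\abs{G} \ge \sqrt{2}$, so each sample is good with probability at least $\sqrt{2}\,\varepsilon/2$.

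A standard geometric tail estimate then shows that within $O(1/\varepsilon)$ outer iterations a good point is sampled with probability at least $1/2$, and the single downstream call to \textsc{BinarySearch} contributes an additional $O(\log(1/\varepsilon))$ queries, for the claimed total of $O(1/\varepsilon)$. I expect the main obstacle to be the measure lower bound on $G$: directly integrating $f'$ over $[0, 2/\varepsilon]$ fails because $f'$ on $G$ is \emph{a priori} unbounded above, so one cannot naively balance the negative mass on $\{f' \le -\varepsilon\}$ against the positive part. The key insight is the shift $g = f + \varepsilon x$, which simultaneously converts the lower bound $f \ge 0$ into a positive net integral of $g'$ and places $g'$ in a regime where every positive excursion begins (and usually ends) at zero, so that the $1$-Lipschitz cap forces linear growth and hence the quadratic-in-width upper bound used above.
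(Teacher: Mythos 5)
Your proof is correct. The overall architecture coincides with the paper's: sample uniformly from $[0,2/\varepsilon]$, show the ``good'' set (where the \textbf{while} loop exits) has Lebesgue measure $\Omega(1)$ so each trial succeeds with probability $\Omega(\varepsilon)$, and finish with a geometric tail bound plus an $O(\log(1/\varepsilon))$ binary search. However, your execution of the key measure bound takes a genuinely different and cleaner route. The paper works directly with $f'$, splits $[0,2/\varepsilon]$ into the three regimes $\{f'\le-\varepsilon\}$, $\{\abs{f'}<\varepsilon\}$, $\{f'\ge\varepsilon\}$, invokes a separate topological covering lemma (Lemma~\ref{lem:analysis}) to produce closed intervals containing $\{f'\ge\varepsilon\}$ inside $\{f'\ge 0\}$ on which the Lipschitz cap applies, and closes with a proof by contradiction involving an implicit constant $c_0$. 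Your shift $g = f + \varepsilon x$ collapses the good set to the single relatively open set $G=\{g'>0\}$, whose decomposition into intervals is canonical and whose left endpoints automatically satisfy $g'(a_i)\le 0$ (including the edge case $a_i=0$, since $f'(0)\le-\varepsilon$ forces $g'(0)\le 0$), so the covering lemma disappears and you obtain the explicit bound $\abs{G}\ge\sqrt 2$ directly rather than by contradiction. Both arguments ultimately rest on the same two ingredients --- the quadratic-in-width bound on positive excursions of a $1$-Lipschitz derivative and the inequality $\sum_i \ell_i^2 \le (\sum_i \ell_i)^2$ --- so this is a streamlining rather than a new idea, but it is a real simplification; your treatment of \textsc{BinarySearch} matches the paper's Lemma~\ref{lem:binary_search}.
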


As usual, the success probability can be boosted by rerunning the algorithm. In Figure~\ref{fig:exp}, we demonstrate the performance of \textsc{RandomSearch} in a numerical experiment as a sanity check.

\begin{figure}[h]
    \centering
    \includegraphics[width=0.7\textwidth]{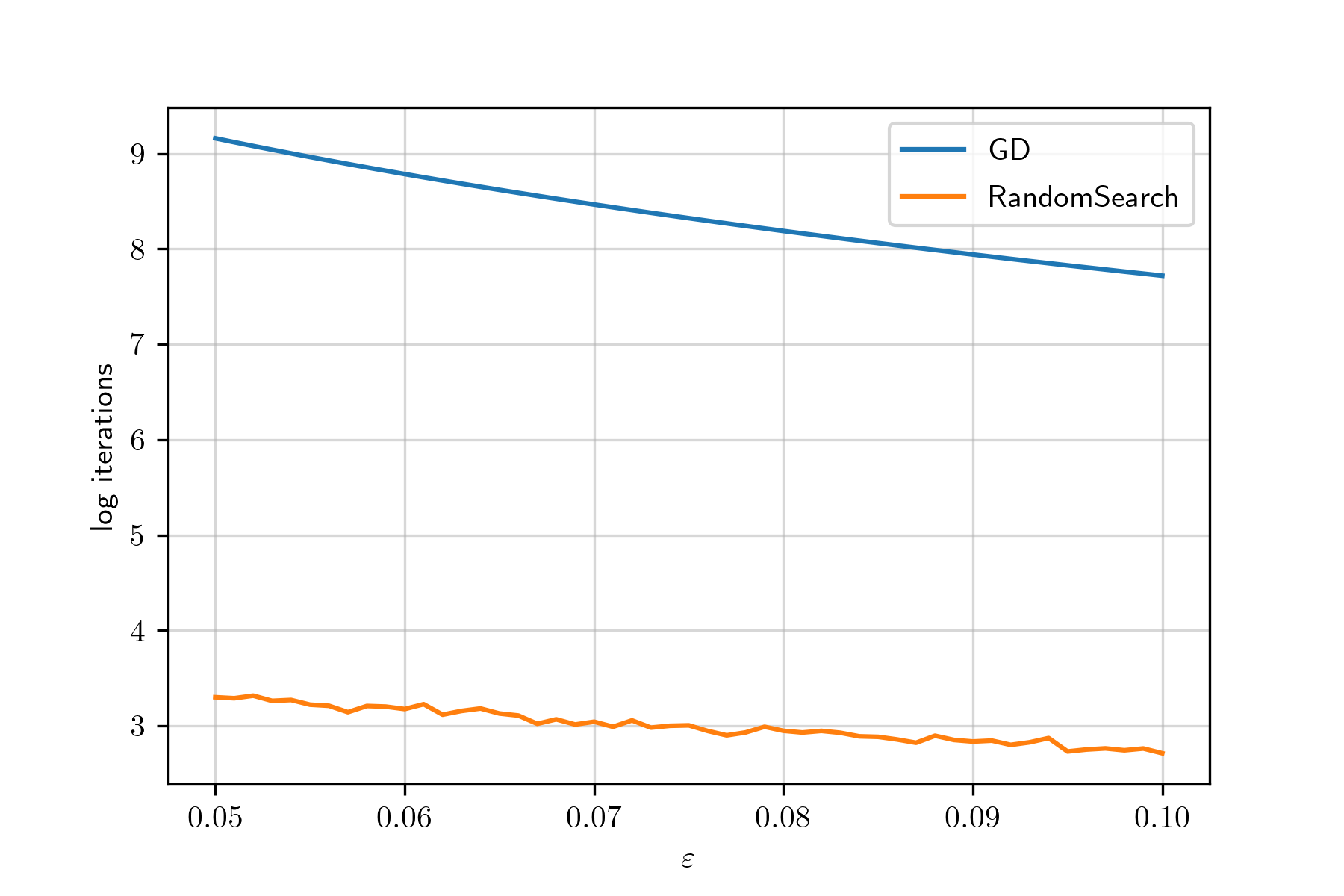}
    \caption{Iteration complexity of gradient descent (GD) vs.\ one run of \textsc{RandomSearch} (Algorithm~\ref{alg:randomized_upper}) for various choices of $\varepsilon$ on an instance of the construction in Section~\ref{scn:randomized_lower}. The flatter slope of the orange line reflects the improved $O(1/\varepsilon)$ complexity of \textsc{RandomSearch} over the $O(1/\varepsilon^2)$ complexity of GD.}\label{fig:exp}
\end{figure}

\subsection{Lower bound for deterministic algorithms}

Against the class of deterministic algorithms, the construction of Theorem~\ref{thm:randomized_lower} can be strengthened to yield a $\Omega(1/\varepsilon^2)$ lower bound.
The idea is based on the concept of a \emph{resisting oracle} $\ms O^{\rm resist}$ from~\citet{nesterov2018cvxopt} which, regardless of the query point $x$, outputs ``$f'(x) = -\varepsilon$''.
The goal then is to show that for any deterministic sequence of queries $x_1,\dotsc,x_N$, if $N \le O(1/\varepsilon^2)$, there exists a $1$-smooth function $f :\R\to\R$ with $f(0) - \inf f\le \Delta$ which is consistent with the output of the oracle, i.e., satisfies $f'(x_i) = -\varepsilon$ for all $i\in [N]$.
Note that this strategy necessarily only provides a lower bound against deterministic algorithms.\footnote{In more detail, the argument is as follows. Let $x_1,\dotsc,x_N$ be the sequence of query points generated by the algorithm when run with $\ms O^{\rm resist}$, and suppose we can find a function $f$ which is consistent with the responses of $\ms O^{\rm resist}$. Then, for a deterministic algorithm, we can be sure that \emph{had the algorithm been run with the oracle $\ms O^{1^{\rm st}}$ for $f$}, it would have generated the same sequence of query points $x_1,\dotsc,x_N$, and hence would have never found an $\varepsilon$-stationary point of $f$ among the $N$ query points. This argument fails if the algorithm incorporates external randomness.}

For simplicity of notation, since the order of the queries does not matter here, we assume that the queries are sorted: $x_1 < \cdots < x_N$.
The function $f$ that we construct has slope $-\varepsilon$ at the query points, but rapidly rises in between the query points to ensure that the condition $f(0) - \inf f \le 1$ holds.
Moreover, we will ensure that $f'(x) = -\varepsilon$ for $x\le 0$ and that $f'$ is periodic on $\R_+$ with period $1/\varepsilon$; hence, we may assume that all of the queries lie in the informative interval $(0,1/\varepsilon)$.
The key here is that for deterministic algorithms, the intervals on which the function $f$ rises can be adapted to the query points, rather than being selected in advance.

The intuition is as follows. If the algorithm has made fewer than $O(1/\varepsilon^2)$ queries, then there must be $\Omega(1/\varepsilon^2)$ disjoint intervals in $[0,1/\varepsilon]$ of length at least $\Omega(\varepsilon)$ in which there are no query points. On each such interval, we can grow our function value by $\Omega(\varepsilon^2)$ while staying smooth and with slope $-\varepsilon$ at the start and end of the interval. Hence, we can guarantee that the constructed function $f$ remains above $f(0) - 1$, while answering $f'(x) = -\varepsilon$ at every query point $x$.

To make this precise, let $\ell_i \deq x_{i+1} - x_i$ and define the function
\begin{align*}
    \Phi_i(x)
    &\deq -\varepsilon \, (x-x_i) \\
    &\qquad{} + \begin{cases} \frac{1}{2} \, {(x-x_i)}^2\,, & x \in [x_i, x_i + \frac{\ell_i}{2}]\,, \\[0.5em] \frac{\ell_i^2}{8} + \frac{\ell_i}{2} \, (x - x_i - \frac{\ell_i}{2}) - \frac{1}{2} \, {(x-x_i - \frac{\ell_i}{2})}^2\,, & x \in [x_i + \frac{\ell_i}{2}, x_{i+1}]\,. \end{cases}
\end{align*}
The construction of $\Phi_i$ satisfies the following properties:
\begin{enumerate}
    \item $\Phi_i$ is continuously differentiable and $1$-smooth on $[x_i, x_{i+1}]$.
    \item $\Phi_i(x_i) = 0$ and $\Phi_i(x_{i+1}) = \ell_i \, (\frac{\ell_i}{4} - \varepsilon)$.
    \item $\Phi_i'(x_i) = \Phi_i'(x_{i+1}) = -\varepsilon$.
\end{enumerate}
Write $x_0 \deq 0$ and $x_{N+1} \deq 1/\varepsilon$.
Recall that $x_i \in (0, 1/\varepsilon)$, for all $i\in [N]$.
We now define
\begin{align*}
    f(x) \deq
    \begin{cases}
        1-\varepsilon\, x\,, & x \in \openclose{-\infty, 0}\,, \\
        f(x_i) - \varepsilon \, (x-x_i)\,, & x \in [x_i, x_{i+1}]~\text{and}~\ell_i < 8\varepsilon \;\; (0 \le i \le N)\,, \\
        f(x_i) + \Phi_i(x)\,, & x \in [x_i, x_{i+1}]~\text{and}~\ell_i \ge 8\varepsilon \;\; (0 \le i \le N)\,, \\
        f(x-1/\varepsilon) + a\,, & x \in \closeopen{1/\varepsilon, \infty}\,,
    \end{cases}
\end{align*}
where $a \deq f(1/\varepsilon) - f(0)$.
See Figure~\ref{fig:det_lower} for an illustration of $f$.
We shall prove that when $N \le O(1/\varepsilon^2)$, then the function $f$ is $1$-smooth and satisfies $f(0) - \inf f \le 1$, thus completing the resisting oracle construction. 
It yields the following theorem, which we prove in Appendix~\ref{scn:pf_det_lower}.

\begin{figure}[h]
    \centering
    \includegraphics[width=0.4\textwidth]{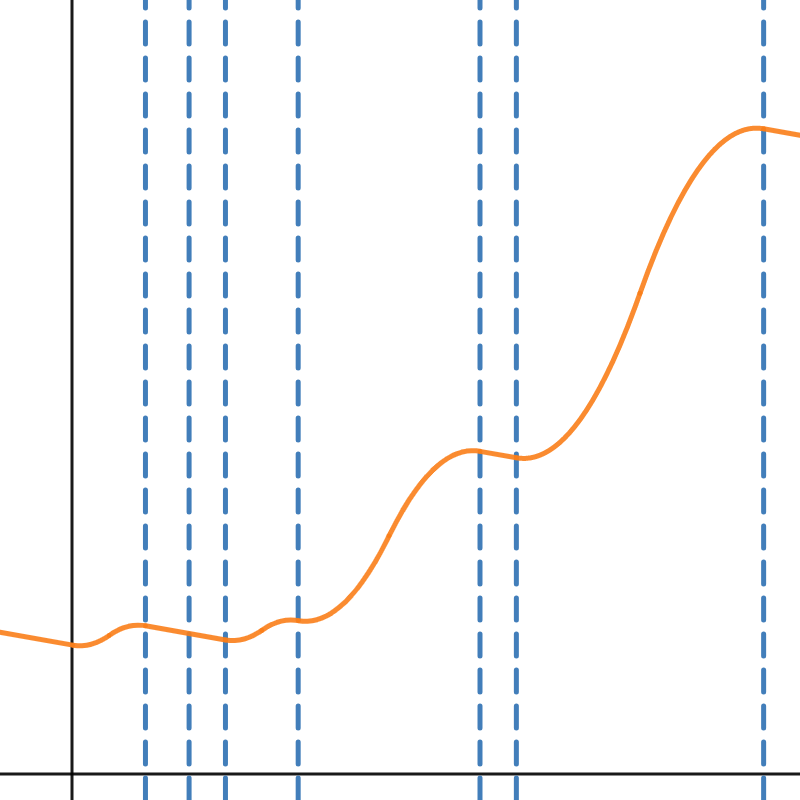}
    \caption{We plot an example of the function $f$. The dashed lines indicate the query points made by the algorithms.}\label{fig:det_lower}
\end{figure}

\begin{theorem}\label{thm:det_lower}
    For all $\varepsilon \in (0, 1)$, it holds that
    \begin{align*}
        \ms C_{\det}\bigl(\varepsilon; \ms O^{1^{\rm st}}\bigr) \ge \Omega\Bigl( \frac{1}{\varepsilon^2}\Bigr)\,.
    \end{align*}
\end{theorem}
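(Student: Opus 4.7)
The plan is to execute the resisting oracle strategy outlined before the theorem statement. Fix a deterministic algorithm and run it against the oracle $\ms O^{\rm resist}$ that always returns $-\varepsilon$; let $x_1 < \cdots < x_N$ be the resulting queries, which I may assume lie in $(0, 1/\varepsilon)$ and include the algorithm's final output (by appending at most one extra query so that the output coincides with some $x_i$). The goal is to exhibit a single $1$-smooth function $f$ with $f(0)-\inf f \le 1$ such that $f'(x_i) = -\varepsilon$ for every $i$; then $|f'(x_i)| = \varepsilon$ is not strictly less than $\varepsilon$, so no query (and hence no output) is $\varepsilon$-stationary. I take $f$ to be exactly the function displayed in the excerpt, with $x_0 := 0$, $x_{N+1} := 1/\varepsilon$, and $\ell_i := x_{i+1} - x_i$.

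The first step is routine: verify the three bulleted properties of $\Phi_i$ by direct differentiation. These immediately yield that $f$ is continuous with matching one-sided slopes $-\varepsilon$ at every breakpoint $x_i$, so $f$ is $C^1$ on $\R$, and that the (a.e.) second derivative lies in $\{-1, 0, 1\}$, so $f'$ is globally $1$-Lipschitz. Simultaneously one reads off $f'(x_i) = -\varepsilon$ for every $i \in [N]$, confirming that no query is $\varepsilon$-stationary.

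The heart of the proof is bounding $\inf f$. On $\R_-$ one has $f(x) = 1-\varepsilon x \ge 1 = f(0)$, so it remains to control $\inf_{\R_+} f$. I would introduce $g(x) := f(x) + \varepsilon x$ and check piecewise that $g' \ge 0$ on $[0,1/\varepsilon]$: on linear segments $g' = 0$, while on each $\Phi_i$-piece $g'(x) = \Phi_i'(x) + \varepsilon$ lies in $[0,\ell_i/2]$ directly from the formula for $\Phi_i'$. Hence $f(x) \ge f(0) - \varepsilon x \ge f(0) - 1$ on the base period $[0, 1/\varepsilon]$, and provided the periodic shift $a := f(1/\varepsilon) - f(0)$ is nonnegative, this bound propagates to all of $\R_+$.

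Summing the per-interval increments gives
\[
 a \;=\; \sum_{i \in I} \ell_i \bigl(\tfrac{\ell_i}{4} - \varepsilon\bigr) \;-\; \varepsilon \sum_{i \notin I} \ell_i \;=\; \tfrac{1}{4}\sum_{i \in I} \ell_i^2 - 1,
\]
where $I := \{i : \ell_i \ge 8\varepsilon\}$, so the construction succeeds iff $\sum_{i \in I} \ell_i^2 \ge 4$. To secure this under the constraint $\sum_i \ell_i = 1/\varepsilon$, I would apply Cauchy--Schwarz to obtain $\sum_i \ell_i^2 \ge 1/(\varepsilon^2(N+1))$ and bound the complementary sum by $\sum_{i \notin I} \ell_i^2 \le 8\varepsilon \sum_{i \notin I} \ell_i \le 8$. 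Combining, $\sum_{i \in I} \ell_i^2 \ge 1/(\varepsilon^2(N+1)) - 8$, which exceeds $4$ whenever $N+1 \le 1/(12\varepsilon^2)$, yielding the desired $N = \Omega(1/\varepsilon^2)$ lower bound. The step requiring the most care is the interplay between the ``small gaps'' $\ell_i < 8\varepsilon$, which are needed so that $f'$ can match $-\varepsilon$ at clustered queries without breaking $1$-smoothness, and the ``big gaps'' $\ell_i \ge 8\varepsilon$, which must contribute enough upward drift to keep $a \ge 0$; the Cauchy--Schwarz estimate is exactly what resolves this tradeoff.
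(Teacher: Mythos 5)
Your proposal is correct and follows essentially the same route as the paper: the same resisting oracle, the same function $f$, and the same reduction to showing that $a = f(1/\varepsilon) - f(0) \ge 0$. The only difference is in the final counting, which is cosmetic: you compute $a = \tfrac{1}{4}\sum_{i\in I}\ell_i^2 - 1$ exactly, apply Cauchy--Schwarz to the full sum $\sum_i \ell_i^2$, and bound the small-gap contribution by $8$, whereas the paper splits into cases according to the size of the complement of $I$ and applies Cauchy--Schwarz only over $I$; both yield the same $\Omega(1/\varepsilon^2)$ bound with comparable constants.
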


The lower bound is matched by gradient descent.
For the sake of completeness, we provide a proof of the matching $O(1/\varepsilon^2)$ upper bound via gradient descent as Theorem~\ref{thm:gd} in Appendix~\ref{scn:preliminaries}.

\subsection{Lower bound for randomized algorithms with zeroth-order information}

We now turn towards algorithms which use the $0^{\rm th}+1^{\rm st}$-order oracle $\ms O^{0^{\rm th}+1^{\rm st}}$.
For the lower bound, we again use the family of functions ${(f_j)}_{j\in [1/\varepsilon]}$ introduced in Section~\ref{scn:randomized_lower}.
The main difference is that given a query point $x \in [0,1/\varepsilon]$, the value of $f_j(x)$ reveals whether or not the interval $[j-1,j]$ lies to the left of $x$ and hence allows for binary search to determine $j$.
Consequently, the lower bound is only of order $\Omega(\log(1/\varepsilon))$.

We prove the following theorem in Appendix~\ref{scn:pf_rand_lower_0}.

\begin{theorem}\label{thm:rand_lower_0}
    For all $\varepsilon \in (0,\frac{1}{8})$, it holds that
    \begin{align*}
        \ms C_{\det}\bigl(\varepsilon; \ms O^{0^{\rm th}+1^{\rm st}}\bigr)
        &\ge \ms C_{\rm rand}\bigl(\varepsilon; \ms O^{0^{\rm th}+1^{\rm st}}\bigr)
        \ge \Omega\Bigl(\log \frac{1}{\varepsilon}\Bigr)\,.
    \end{align*}
\end{theorem}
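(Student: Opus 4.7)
The plan is to reuse the family ${(f_j)}_{j\in[1/\varepsilon]}$ from Section~\ref{scn:randomized_lower} and apply Yao's minimax principle with the uniform prior on $j$ (assuming $1/\varepsilon$ is an integer); the deterministic lower bound then follows trivially from the randomized one since any deterministic algorithm is a special randomized one. If a randomized algorithm with $N$ queries succeeds with probability at least $1/2$ on each $f_j$, then averaging over its internal randomness yields a deterministic realization $A^*$ with the same query count that correctly finds an $\varepsilon$-stationary point for at least $\tfrac{1}{2\varepsilon}$ values of $j$. The task therefore reduces to upper-bounding the number of $j$ on which such a deterministic algorithm can succeed.

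The key structural observation is that the oracle $\ms O^{0^{\rm th}+1^{\rm st}}$ is \emph{effectively ternary} per query on this family. For any query point $x$, set $y \deq x \bmod 1/\varepsilon$ if $x \ge 0$ and $y \deq x$ otherwise, and classify the response by the position of $y$ relative to the rising interval $[j-1,j]$. If $y < j-1$ (including every $x<0$), then the response is $(1-\varepsilon\, y, -\varepsilon)$, which does not depend on $j$; if $y > j$, it is $(2-\varepsilon\, y, -\varepsilon)$, also independent of $j$; and if $y \in [j-1,j]$, the response does depend on $j$ through $\Phi(y-(j-1))$, but for a fixed query this ``middle'' case singles out a unique value of $j$ (two at integer $y$, which is easily absorbed into the constant). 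Modeling $A^*$ as a decision tree rooted at the first query, each internal node thus has at most three descendants, so the tree has at most $3^N$ leaves.

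Each leaf emits a fixed output $\hat x$, and $\hat x$ is $\varepsilon$-stationary for $f_j$ only if $\hat x \bmod 1/\varepsilon$ falls in one of the two short sub-intervals of $[j-1,j]$ on which $|f_j'|<\varepsilon$; since the intervals $[j-1,j]$ are disjoint across distinct $j \in [1/\varepsilon]$ up to a measure-zero set of integer endpoints, a single $\hat x$ can certify stationarity for at most one value of $j$. Therefore $A^*$ succeeds on at most $3^N$ values of $j$, so we must have $3^N \ge \tfrac{1}{2\varepsilon}$, which yields $N \ge \log_3 \tfrac{1}{2\varepsilon} = \Omega(\log(1/\varepsilon))$. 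I expect the main obstacle to be rigorously verifying the ternary structure across periods and at integer boundary points of $y$, but this should amount to an elementary case analysis of the piecewise definition of $f_j$, using that both the affine pieces on $(-\infty,j-1]$ and on $[j,1/\varepsilon]$ have $j$-independent formulas (after the periodic reduction) while the rising piece on $[j-1,j]$ is in bijection with the index $j$.
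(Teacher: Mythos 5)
Your argument is correct and reaches the same $\Omega(\log(1/\varepsilon))$ bound, but by a genuinely different route from the paper. The paper stays entirely within the Fano framework of Theorem~\ref{thm:randomized_lower}: it bounds the mutual information $I(\{x_i,\ms O_{f_J}(x_i) : i \in [N]\}; J)$ by $O(N)$ using the observation that, conditionally on the past, the response $(f_J(x_i), f_J'(x_i))$ can take only $O(1)$ distinct values and hence has $O(1)$ conditional entropy, and then applies Fano's inequality to the randomized algorithm directly by conditioning on its seed. You instead derandomize first via an averaging (Yao) step and then run a decision-tree counting argument: each node has $O(1)$ children because the response on this family is effectively ternary (rising interval to the left of the query, to the right, or located), so the tree has at most $3^N$ leaves, each of whose fixed output certifies at most $O(1)$ indices $j$. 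The two proofs encode the same underlying fact---each query to $\ms O^{0^{\rm th}+1^{\rm st}}$ reveals $O(1)$ bits about $j$---but yours is more elementary (no information theory) and arguably more transparent, while the paper's buys uniformity with the proof of Theorem~\ref{thm:randomized_lower} and avoids the explicit derandomization. Two bookkeeping points in your write-up are slightly loose but harmless: an output $\hat x$ with $\hat x \bmod 1/\varepsilon$ an integer can be $\varepsilon$-stationary for two consecutive indices $j$ rather than one, and if ``finding'' is interpreted as one of the \emph{queried} points being stationary then each root-to-leaf path contributes up to $N+1$ candidate points; both only change $3^N$ to $O(N\,3^N)$ and hence affect only the constant in the final bound.
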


\subsection{An optimal deterministic algorithm with zeroth-order information}

Finally, we provide a deterministic algorithm whose complexity matches the lower bound in Theorem~\ref{thm:rand_lower_0}.
At a high level, the idea is to use the zeroth-order information to perform binary search, but the actual algorithm is slightly more involved and requires the consideration of various cases.

We summarize the idea behind the algorithm. First, as described earlier, we may freely assume $f\ge 0$, $f(0) = 1$, and $f'(0) \le -\varepsilon$. Also, we recall that if the algorithm ever sees a point $x$ with either $\abs{f'(x)} < \varepsilon$ or $f'(x) > 0$, then we are done (in the latter case, we can call Algorithm~\ref{alg:binary_search}: \textsc{BinarySearch}).
\begin{enumerate}
    \item \textsc{DecreaseGap} (Algorithm~\ref{alg:decr_gap}) checks the value of $f(2/\varepsilon)$. If $f(2/\varepsilon) \le \frac{3}{4} \, f(0)$, then we have made progress on the objective gap and we may treat $2/\varepsilon$ as the new origin. This can happen at most $O(\log(1/\varepsilon))$ times. Otherwise, we have $f(2/\varepsilon) \ge \frac{3}{4} \, f(0)$, and we move on to the next phase of the algorithm.
    \item Set $x_- \deq 0$ and $x_+ \deq 2/\varepsilon$. There are two cases: either $\frac{3}{4} \, f(x_-) \le f(x_+) \le f(x_-)$, in which case $f(x_-) - f(x_+) \le \frac{\varepsilon}{4} \, (x_+ - x_-)$, or $f(x_+) \ge f(x_-)$.
    \item The first case is handled by \textsc{BinarySearchII} (Algorithm~\ref{alg:binary_search_ii}). A simple calculation reveals that the condition $0 \le f(x_-) - f(x_+) \le \frac{3}{4} \, (x_+ - x_-)$ together with $f'(x_-) \le -\varepsilon$ implies the existence of an $\varepsilon$-stationary point in $[x_-, x_+]$. We now check the midpoint $m$ of $x_-$ and $x_+$.
    If $f(m) \notin [f(x_+), f(x_-)]$, then we arrive at the second case. Otherwise, we replace either $x_-$ or $x_+$ with $m$; one of these two choices will cut the value of $f(x_-) - f(x_+)$ by at least half, thereby ensuring that the condition $0 \le f(x_-) - f(x_+) \le \frac{3}{4} \, (x_+ - x_-)$ continues to hold. This can happen at most $O(\log(1/\varepsilon))$ times.
    \item Finally, the second case is handled by \textsc{BinarySearchIII} (Algorithm~\ref{alg:binary_search_iii}). In this case, $f(x_+) \ge f(x_-)$ together with $f'(x_-) \le -\varepsilon$ ensures that there is a stationary point in $[x_-, x_+]$.
    We then check the value of $f(m)$ where $m$ is the midpoint of $x_-$ and $x_+$. It is straightforward to check that we can replace either $x_-$ or $x_+$ with $m$ and preserve the condition $f(x_+) \ge f(x_-)$. This can happen at most $O(\log(1/\varepsilon))$ times.
\end{enumerate}

\begin{adjustwidth}{\textwidth}{}
    \centering
    \begin{algorithm}[H]
        \caption{\textsc{ZerothOrder}}\label{alg:0}
        \KwData{oracle $\ms O^{0^{\rm th}+1^{\rm st}}$ for $f$}
\KwResult{$\varepsilon$-stationary point $x$}
set $x_- \gets \textsc{DecreaseGap}(\ms O^{0^{\rm th}+1^{\rm st}}, 0)$\;

set $x_+ \gets x_- + 2/\varepsilon$\;

\uIf{$\abs{f'(x_-)} < \varepsilon$}{output $x_-$}
\uElseIf{$f(x_+) \le f(x_-)$}{call \textsc{BinarySearchII}$(\ms O^{0^{\rm th}+1^{\rm st}}, x_-, x_+)$}
\uElseIf{$f(x_+) > f(x_-)$}{call \textsc{BinarySearchIII}$(\ms O^{0^{\rm th}+1^{\rm st}}, x_-, x_+)$}
    \end{algorithm}

    \begin{algorithm}[H]
        \caption{\textsc{DecreaseGap}}\label{alg:decr_gap}
        \KwData{oracle $\ms O^{0^{\rm th}+1^{\rm st}}$ for $f$; point $x_0$}
        \KwResult{either an $\varepsilon$-stationary point $x$ or a point $x$ such that $f(x) \le f(x_0)$, $f'(x) \le -\varepsilon$, and $f(x+2/\varepsilon) \ge \frac{3}{4} \, f(x)$}
        \uIf{$\abs{f'(x_0+2/\varepsilon)} < \varepsilon$}{output $x_0+2/\varepsilon$}
        \uElseIf{$f'(x_0+2/\varepsilon) > 0$}{call \textsc{BinarySearch}$(\ms O^{0^{\rm th}+1^{\rm st}}, x_0, x_0+2/\varepsilon)$}
        \uElseIf{$f(x_0+2/\varepsilon) \ge \frac{3}{4} \, f(x_0)$}{output $x_0$\;}
        \uElse{call \textsc{DecreaseGap}$(\ms O^{0^{\rm th}+1^{\rm st}}, x_0+2/\varepsilon)$\;}
    \end{algorithm}

    \begin{algorithm}[H]
        \caption{\textsc{BinarySearchII}}\label{alg:binary_search_ii}
        \KwData{oracle $\ms O^{0^{\rm th}+1^{\rm st}}$ for $f$; points $x_- < x_+$ with $f'(x_-) \le -\varepsilon$ and $0 \le f(x_-) - f(x_+) \le \frac{\varepsilon}{4} \, (x_+ - x_-)$}
        \KwResult{an $\varepsilon$-stationary point $x$\;}
        set $m \gets \frac{x_- + x_+}{2}$\;

        \uIf{$\abs{f'(m)} < \varepsilon$}{output $m$}
        \uElseIf{$f'(m) > 0$}{call \textsc{BinarySearch}$(\ms O^{0^{\rm th}+1^{\rm st}}, x_-, m)$}
        \uElseIf{$f(m) \ge f(x_-)$}{call \textsc{BinarySearchIII}$(\ms O^{0^{\rm th}+1^{\rm st}}, x_-, m)$}
        \uElseIf{$f(m) \le f(x_+)$}{call \textsc{BinarySearchIII}$(\ms O^{0^{\rm th}+1^{\rm st}}, m, x_+)$}
        \uElseIf{$f(x_-) - f(m) \le \frac{1}{2} \, (f(x_-) - f(x_+))$}{call \textsc{BinarySearchII}$(\ms O^{0^{\rm th}+1^{\rm st}}, x_-, m)$}
        \uElseIf{$f(m) - f(x_+) \le \frac{1}{2} \, (f(x_-) - f(x_+))$}{call \textsc{BinarySearchII}$(\ms O^{0^{\rm th}+1^{\rm st}}, m, x_+)$}
    \end{algorithm}

    \begin{algorithm}[H]
        \caption{\textsc{BinarySearchIII}}\label{alg:binary_search_iii}
        \KwData{oracle $\ms O^{0^{\rm th}+1^{\rm st}}$ for $f$; points $x_- < x_+$ with $f'(x_-) \le -\varepsilon$ and $f(x_+) \ge f(x_-)$}
        \KwResult{an $\varepsilon$-stationary point $x$\;}
        set $m \gets \frac{x_- + x_+}{2}$\;

        \uIf{$\abs{f'(m)} < \varepsilon$}{output $m$}
        \uElseIf{$f'(m) > 0$}{call \textsc{BinarySearch}$(\ms O^{0^{\rm th}+1^{\rm st}}, x_-, m)$}
        \uElseIf{$f(m) \ge f(x_-)$}{call \textsc{BinarySearchIII}$(\ms O^{0^{\rm th}+1^{\rm st}}, x_-, m)$}
        \uElse{call \textsc{BinarySearchIII}$(\ms O^{0^{\rm th}+1^{\rm st}}, m, x_+)$}
    \end{algorithm}
\end{adjustwidth}

We prove the following theorem in Appendix~\ref{scn:pf_det_upper_0}.

\begin{theorem}\label{thm:det_upper_0}
    Assume that $f : \R\to\R$ is $1$-smooth, $f\ge 0$, $f(0) = 1$, and $f'(0) \le -\varepsilon$.
    Then, \textsc{ZerothOrder} (Algorithm~\ref{alg:0}) terminates with an $\varepsilon$-stationary point for $f$ using at most $O(\log(1/\varepsilon))$ queries to the oracle.
\end{theorem}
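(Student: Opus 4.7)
The plan is to analyze each of the four subroutines in turn, establishing for each (i) that its stated precondition is preserved by every recursive call, (ii) that termination yields an $\varepsilon$-stationary point, and (iii) that at most $O(\log(1/\varepsilon))$ oracle queries are used. Two observations are used throughout: whenever the algorithm sees $|f'(x)| < \varepsilon$ it returns and is trivially correct; and whenever it encounters points $a < b$ with $f'(a) \le -\varepsilon$ and $f'(b) > 0$, the intermediate value theorem for the continuous function $f'$ guarantees an $\varepsilon$-stationary point in $[a,b]$, which is what justifies every call to \textsc{BinarySearch}. A standard analysis of \textsc{BinarySearch} itself shows that interval halving combined with $1$-smoothness forces $|f'(m)| < \varepsilon$ at the midpoint once the interval has length $O(\varepsilon)$, giving complexity $O(\log(1/\varepsilon))$.

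I would then analyze \textsc{DecreaseGap}, whose invariant is $f'(x_0) \le -\varepsilon$; this is preserved on the recursive branch because the branches $|f'(x_0+2/\varepsilon)| < \varepsilon$ and $f'(x_0+2/\varepsilon) > 0$ have been ruled out. Each recursive call reduces $f(x_0)$ by a factor of at least $3/4$. To bound the depth, observe that $f'(x_0) \le -\varepsilon$ together with $1$-smoothness gives $f'(x) \le -\varepsilon/2$ on $[x_0, x_0+\varepsilon/2]$, hence $f(x_0+\varepsilon/2) \le f(x_0) - \varepsilon^2/4$, and $f \ge 0$ then yields $f(x_0) \ge \varepsilon^2/4$. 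The recursion depth is therefore $O(\log(1/\varepsilon))$. When \textsc{DecreaseGap} returns a point $x_-$ rather than an $\varepsilon$-stationary point, $x_-$ satisfies $f'(x_-) \le -\varepsilon$ and $f(x_-+2/\varepsilon) \ge \frac{3}{4}\,f(x_-)$; setting $x_+ := x_-+2/\varepsilon$ and using $f(x_-) \le f(0) = 1$ yields $0 \le f(x_-)-f(x_+) \le \frac{1}{4}\,f(x_-) \le \frac{1}{4} \le \frac{1}{2} = \frac{\varepsilon}{4}\,(x_+-x_-)$, which is exactly the precondition required to invoke \textsc{BinarySearchII} (or \textsc{BinarySearchIII} in the complementary case $f(x_+) > f(x_-)$).

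The main work, and the expected main obstacle, is the case analysis for \textsc{BinarySearchII}: for each of its six branches one must check the precondition of the callee. The nontrivial step concerns the two recursive \textsc{BinarySearchII} calls. Completeness of the case analysis uses the identity $(f(x_-)-f(m)) + (f(m)-f(x_+)) = f(x_-)-f(x_+)$, so after ruling out $f(m) \notin [f(x_+),f(x_-)]$, at least one summand is bounded by half the total; then the precondition $0 \le f(x_-)-f(m) \le \frac{1}{2}\,(f(x_-)-f(x_+)) \le \frac{1}{2} \cdot \frac{\varepsilon}{4}\,(x_+-x_-) = \frac{\varepsilon}{4}\,(m-x_-)$ is exactly what the halved interval requires, and symmetrically on the right. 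Termination in $O(\log(1/\varepsilon))$ steps follows from interval halving: once $x_+-x_- = O(\varepsilon)$, the mean value theorem yields $\xi \in (x_-,x_+)$ with $f'(\xi) \ge -(f(x_-)-f(x_+))/(x_+-x_-) \ge -\varepsilon/4$, and $1$-smoothness then forces $|f'(m)| < \varepsilon$, so the terminating branch fires. The analysis of \textsc{BinarySearchIII} is strictly easier: its invariant $f(x_+) \ge f(x_-)$, $f'(x_-) \le -\varepsilon$ is preserved by inspection of the three nontrivial branches, and the same mean-value argument in the shrunken interval forces termination. Since the call graph is acyclic with bounded depth and each subroutine contributes $O(\log(1/\varepsilon))$ queries, the total query complexity of \textsc{ZerothOrder} is $O(\log(1/\varepsilon))$, and correctness follows because every returned point is either verified directly to be $\varepsilon$-stationary or is located via \textsc{BinarySearch} in an interval with sign-changing derivative.
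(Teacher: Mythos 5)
Your proposal is correct and follows essentially the same route as the paper: analyze each subroutine separately, verify that every recursive call preserves its callee's precondition (with the key calculation $\frac{1}{2}\cdot\frac{\varepsilon}{4}\,(x_+-x_-)=\frac{\varepsilon}{4}\,(m-x_-)$ for \textsc{BinarySearchII}), and bound each recursion depth by $O(\log(1/\varepsilon))$ using $1$-smoothness. The only differences are cosmetic: you terminate \textsc{BinarySearchII} by showing the midpoint itself becomes $\varepsilon$-stationary via the mean value theorem, whereas the paper derives a contradiction with $f'(x_-)\le-\varepsilon$ once the interval is shorter than $\varepsilon/2$, and you replace the descent-lemma bound $f(x_0)\ge\varepsilon^2/2$ in \textsc{DecreaseGap} with a direct integration giving $f(x_0)\ge\varepsilon^2/4$.
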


\section{Conclusion}

We have characterized the oracle complexity of finding an $\varepsilon$-stationary point of a smooth univariate function $f : \R\to\R$ in four natural settings of interest.
Besides providing insight into the limitations of gradient descent, our results exhibit surprising separations between the power of deterministic and randomized algorithms, and between algorithms that use zeroth-order information and algorithms (like gradient descent) which only use first-order information.

We conclude with a number of open directions for future research.
\begin{itemize}
    \item The main question motivating this work remains open, namely, for randomized algorithms using zeroth- and first-order information, \textbf{is it possible to prove a $\Omega(1/\varepsilon^2)$ complexity lower bound with a construction in dimension $d = O(\log(1/\varepsilon))$?}
        An affirmative answer to this question would likely build upon the lower bound techniques used in~\citet{vavavis1993complexity, bubmik2020gradientflow}.

        An even more ambitious goal is to fully characterize the query complexity of finding stationary points using zeroth- and first-order information in every fixed dimension $d$.
    \item Towards the above question, we also ask: \textbf{is there an analogue of gradient flow trapping~\citep{bubmik2020gradientflow} for unconstrained optimization?}
    \item We have established that among deterministic algorithms which only use first-order queries, gradient descent is optimal already in dimension one.
        Although randomized algorithms outperform GD in our setting of investigation, it is unclear to what extent randomness helps in higher dimension.
        Hence, we make the following bold conjecture: \textbf{can one prove a $\Omega(1/\varepsilon^2)$ complexity lower bound for randomized algorithms which only make first-order queries in dimension two?}
\end{itemize}

\acks{SC was supported by the Department of Defense (DoD) through the National Defense Science \& Engineering Graduate Fellowship (NDSEG) Program.}

\bibliography{ref}

\appendix

\section{Proofs}

\subsection{Preliminaries}\label{scn:preliminaries}

The standard approach for proving lower bounds against randomized algorithms is to reduce the task under consideration to a statistical estimation problem, for which we can bring to bear tools from information theory.
Namely, we use \emph{Fano's inequality}; we refer readers to~\citet[Chapter 2]{coverthomas2006infotheory} for background on entropy and mutual information.

\begin{theorem}[Fano's inequality]\label{thm:fano}
    Let $m$ be a positive integer and let $J \sim \unif([m])$.
    Then, for any estimator $\widehat J$ of $J$ which is measurable w.r.t.\ some data $Y$, it holds that
    \begin{align*}
        \Pr\{\widehat J \ne J\}
        &\ge 1 - \frac{I(J; Y) + \ln 2}{\ln m}\,,
    \end{align*}
    where $I$ denotes the mutual information.
\end{theorem}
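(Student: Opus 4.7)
The plan is to follow the standard information-theoretic argument based on the chain rule for entropy together with the data-processing inequality. Let $E \deq \mathbbm 1\{\widehat J \ne J\}$ denote the error indicator. The key observation is that $E$ is a deterministic function of the pair $(J, \widehat J)$, so $H(E \mid J, \widehat J) = 0$, and hence by the chain rule applied two different ways,
\begin{align*}
    H(J \mid \widehat J)
    &= H(E, J \mid \widehat J)
    = H(E \mid \widehat J) + H(J \mid E, \widehat J)\,.
\end{align*}
I would then bound each term on the right. For the first, $H(E \mid \widehat J) \le H(E) \le \ln 2$ since $E$ is binary. For the second, decompose according to the value of $E$: conditionally on $E = 0$ and $\widehat J$, the variable $J$ equals $\widehat J$ and has zero entropy, while conditionally on $E = 1$ and $\widehat J$, the variable $J$ is supported on at most $m-1$ values, so its entropy is at most $\ln(m-1) \le \ln m$. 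This gives $H(J \mid E, \widehat J) \le \Pr\{\widehat J \ne J\} \, \ln m$.

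Next, I would use that $J$ is uniform, so $H(J) = \ln m$, and hence $H(J \mid \widehat J) = \ln m - I(J; \widehat J)$. Combining with the previous step,
\begin{align*}
    \ln m - I(J; \widehat J)
    &\le \ln 2 + \Pr\{\widehat J \ne J\} \, \ln m\,.
\end{align*}
Finally, since $\widehat J$ is measurable with respect to $Y$, the data-processing inequality gives $I(J; \widehat J) \le I(J; Y)$, and rearranging yields the claimed bound. The routine nature of each step means there is no real obstacle; the only thing to be careful about is the decomposition of $H(J \mid E, \widehat J)$ by conditioning on the two values of $E$, which is where the factor $\ln m$ and the error probability appear together.
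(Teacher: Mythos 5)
Your proof is correct and is the standard textbook argument for Fano's inequality (chain rule on $H(E, J \mid \widehat J)$, bounding $H(J \mid E, \widehat J)$ by $\Pr\{\widehat J \ne J\}\ln m$, then the data-processing inequality for the Markov chain $J \to Y \to \widehat J$), which is exactly the argument the paper defers to by citing Cover--Thomas. No gaps; each step, including the use of $H(J) = \ln m$ from uniformity and the rearrangement, is as in the reference.
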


For the sake of completeness, we also include a proof of the $O(1/\varepsilon^2)$ complexity bound for gradient descent.

\begin{theorem}\label{thm:gd}
    Suppose that $f : \R^d\to\R$ is $1$-smooth with $f(0) - \inf f \le 1$.
    Set $x_0 \deq 0$ and for $k\in\N$, consider the iterates of GD with step size $1$:
    \begin{align*}
        x_{k+1}
        &\deq x_k - \nabla f(x_k)\,.
    \end{align*}
    Then,
    \begin{align*}
        \min_{k=0,1,\dotsc,N-1}{\norm{\nabla f(x_k)}} \le \sqrt{\frac{2}{N}}\,.
    \end{align*}
\end{theorem}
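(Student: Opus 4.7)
The plan is to use the standard descent lemma argument. The starting point is that since $f$ is $1$-smooth, for any $x,y\in\R^d$ we have
\begin{align*}
f(y) \le f(x) + \langle \nabla f(x), y-x\rangle + \tfrac{1}{2}\,\norm{y-x}^2\,.
\end{align*}
I would apply this with $x = x_k$ and $y = x_{k+1} = x_k - \nabla f(x_k)$ to get the one-step decrease
\begin{align*}
f(x_{k+1}) \le f(x_k) - \norm{\nabla f(x_k)}^2 + \tfrac{1}{2}\,\norm{\nabla f(x_k)}^2 = f(x_k) - \tfrac{1}{2}\,\norm{\nabla f(x_k)}^2\,.
\end{align*}

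Next, I would telescope this inequality from $k=0$ to $k = N-1$, which gives
\begin{align*}
\tfrac{1}{2} \sum_{k=0}^{N-1} \norm{\nabla f(x_k)}^2 \le f(x_0) - f(x_N) \le f(0) - \inf f \le 1\,,
\end{align*}
using the hypothesis $f(0)-\inf f \le 1$ in the last step.

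Finally, since the minimum of a finite collection of non-negative numbers is at most their average, the bound $\sum_{k=0}^{N-1} \norm{\nabla f(x_k)}^2 \le 2$ yields $\min_{k=0,\dotsc,N-1} \norm{\nabla f(x_k)}^2 \le 2/N$, and taking square roots completes the proof. There is no real obstacle here; the only thing to be mildly careful about is invoking the descent lemma in the exact form stated (which is the integrated consequence of $1$-Lipschitzness of $\nabla f$) and noting that $f(x_N) \ge \inf f$ so the right-hand side of the telescoped inequality is controlled by $\Delta = 1$.
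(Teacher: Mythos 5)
Your proof is correct and is essentially identical to the paper's: both apply the descent lemma to obtain the per-step decrease $f(x_{k+1}) \le f(x_k) - \frac{1}{2}\,\norm{\nabla f(x_k)}^2$, telescope, and bound the minimum squared gradient norm by the average. No issues.
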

\begin{proof}
    Due to the $1$-smoothness of $f$,
    \begin{align}\label{eq:descent_lemma}
        f(x_{k+1}) - f(x_k)
        &\le \langle \nabla f(x_k), x_{k+1}-x_k\rangle + \frac{1}{2} \, \norm{x_{k+1} - x_k}^2
        = - \frac{1}{2} \, \norm{\nabla f(x_k)}^2\,.
    \end{align}
    Rearranging this and summing,
    \begin{align*}
        \min_{k=0,1,\dotsc,N-1}{\norm{\nabla f(x_k)}^2}
        &\le \frac{1}{N} \sum_{k=0}^{N-1} \norm{\nabla f(x_k)}^2
        \le \frac{2}{N} \sum_{k=0}^{N-1} \{f(x_k) - f(x_{k+1})\} \\
        &\le \frac{2}{N} \, \{f(0) - f(x_N)\}
        \le \frac{2}{N}\,\{f(0) - \inf f\}
        \le \frac{2}{N}\,.
    \end{align*}
\end{proof}

\subsection{Proof of Theorem~\ref{thm:randomized_lower}}\label{scn:pf_randomized_lower}

\begin{proof}[Proof of Theorem~\ref{thm:randomized_lower}]
By making the value of $\varepsilon$ larger (up to a factor of $2$), we may assume that $1/\varepsilon$ is an integer.

We reduce the optimization task to a statistical estimation problem.
Let $J \sim \unif([1/\varepsilon])$.
Since the only regions in which $\abs{f_j'} < \varepsilon$ are contained in intervals of the form $k/\varepsilon + [j-1, j]$ for some $k\in\N$, then finding an $\varepsilon$-stationary point of $f_J$ implies that the algorithm can guess the value of $J$ (exactly).

On the other hand, we lower bound the number of queries required to guess the value of $J$.
Let $x_1,\dotsc,x_N$ denote the query points of the algorithm, which may also depend on an external source of randomness $U$.
Write $\ms O_{f_j}(x) = f_j'(x)$ for the output of the oracle for $f_j$ on the query $x$ (we omit the superscript $1^{\rm st}$ for brevity).
Let $\widehat J$ be any estimator of $J$ based on $\{x_i, \ms O_{f_J}(x_i) : i\in [N]\}$.
Then, by Fano's inequality (Theorem~\ref{thm:fano}),
\begin{align*}
    \Pr\{\widehat J \ne J\}
    &\ge 1 - \frac{I(\{x_i, \ms O_{f_J}(x_i) : i\in [N]\}; J) + \ln 2}{\ln(1/\varepsilon)}\,.
\end{align*}
First, suppose that the algorithm is deterministic.
This means that each $x_i$ is a deterministic function of $\{x_{i'}, \ms O_{f_J}(x_{i'}) : i' \in [i-1]\}$.
The chain rule for the mutual information implies that
\begin{align*}
    &I\bigl(\{x_i, \ms O_{f_J}(x_i) : i\in [N]\}; J\bigr) \\
    &\qquad \le \sum_{i=1}^N I\bigl(\ms O_{f_J}(x_i); J \bigm\vert \{x_{i'}, \ms O_{f_J}(x_{i'}) : i' \in [i-1]\}\bigr)\,.
\end{align*}
On the other hand, there are two possibilities for the $i$-th term in the summation. Either one of the previous queries already landed in an interval corresponding to $J$, in which case $J$ is already known and the mutual information is zero, or none of the previous queries have hit an interval corresponding to $J$.
In the latter case, conditionally on the information up to iteration $i$, $J$ is uniformly distributed on $1/\varepsilon-i$ remaining intervals, and so
\begin{align*}
    &I\bigl(\ms O_{f_J}(x_i); J \bigm\vert \{x_{i'}, \ms O_{f_J}(x_{i'}) : i' \in [i-1]\}\bigr) \\
    &\qquad \le H\bigl(\ms O_{f_J}(x_i) \bigm\vert \{x_{i'}, \ms O_{f_J}(x_{i'}) : i' \in [i-1]\}\bigr)
    = h\bigl( \frac{1}{1/\varepsilon-i}\bigr)\,,
\end{align*}
with $h$ denoting the entropy function $p\mapsto p\ln \frac{1}{p} + (1-p) \ln \frac{1}{1-p}$.
The last inequality follows because conditionally, $\ms O_{f_J}(x_i)$ can only be one of two possible values with probabilities $\frac{1}{1/\varepsilon-i}$ and $1-\frac{1}{1/\varepsilon-i}$ respectively.
If $N \le 1/(2\varepsilon)$, then
\begin{align*}
    I\bigl(\{x_i, \ms O_{f_J}(x_i) : i\in [N]\}; J\bigr)
    &\le 2\sum_{i=1}^N \frac{1}{1/\varepsilon-i} \ln\bigl( \frac{1}{\varepsilon}-i\bigr)
    \le 4N\varepsilon \ln \frac{1}{\varepsilon}\,.
\end{align*}
Hence,
\begin{align}\label{eq:lower_bd_prob}
    \Pr\{\widehat J \ne J\}
    &\ge 1 - \frac{4N\varepsilon \ln(1/\varepsilon) + \ln 2}{\ln(1/\varepsilon)}
    > \frac{1}{2}
\end{align}
provided that $\varepsilon \le \frac{1}{8}$ and $N \le O(1/\varepsilon)$ for a sufficiently small implied constant.
Although we have proven the bound~\eqref{eq:lower_bd_prob} for deterministic algorithms, the bound~\eqref{eq:lower_bd_prob} continues to hold for randomized algorithms simply by conditioning on the random seed $U$ which is independent of $J$.

We have proven that any randomized algorithm which is guaranteed to find an $\varepsilon$-stationary point of $f_J$ must use at least $N \ge \Omega(1/\varepsilon)$ queries, or
\begin{align*}
    \ms C(\varepsilon; 5,1,1,\ms O^{\rm 1st}) \ge \Omega\bigl( \frac{1}{\varepsilon}\bigr)\,.
\end{align*}
We conclude by applying the rescaling lemma (Lemma~\ref{lem:reduction}).
\end{proof}

\subsection{Proof of Theorem~\ref{thm:randomized_upper}}\label{scn:pf_randomized_upper}

First, we analyze the subroutine \textsc{BinarySearch}.

\begin{lemma}\label{lem:binary_search}
    Suppose that $f$ is $1$-smooth.
    Then, \textsc{BinarySearch} (Algorithm~\ref{alg:binary_search}) terminates with an $\varepsilon$-stationary point for $f$ using at most $O(\log \frac{x_1-x_0}{\varepsilon})$ queries to the oracle.
\end{lemma}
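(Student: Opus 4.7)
The plan is to prove, by induction on the recursion depth, that \textsc{BinarySearch} maintains its precondition through every recursive call, always returns an $\varepsilon$-stationary point when it halts, and halves the working interval at each call, yielding the claimed $O(\log\frac{x_1 - x_0}{\varepsilon})$ query count.

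First I would check preservation of the invariant. At any call with endpoints $a < b$ satisfying $f'(a) \le -\varepsilon$ and $f'(b) > 0$, if the guard $|f'(m)| < \varepsilon$ fails at the midpoint $m = (a+b)/2$, then either $f'(m) \le -\varepsilon$, in which case the recursive call on $[m, b]$ inherits the invariant with the new left endpoint, or $f'(m) > 0$, in which case the recursive call on $[a, m]$ inherits it with the new right endpoint. These two alternatives are exhaustive, since the failed guard forces $|f'(m)| \ge \varepsilon$, so $f'(m)$ is either at most $-\varepsilon$ or at least $\varepsilon$. Correctness of any returned value is immediate from the guard.

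The main content is termination and the query count. Since $f'$ is continuous and changes sign on $[a,b]$, the intermediate value theorem produces $x^\star \in (a,b)$ with $f'(x^\star) = 0$. Using $1$-Lipschitzness of $f'$, which follows from $1$-smoothness of $f$, we obtain
\begin{align*}
|f'(m)| = |f'(m) - f'(x^\star)| \le |m - x^\star| \le b - a.
\end{align*}
Hence as soon as the interval length drops below $\varepsilon$, the guard must succeed, so \textsc{BinarySearch} terminates after at most $\lceil \log_2((x_1 - x_0)/\varepsilon) \rceil$ levels of recursion. Each level issues a single oracle query for $f'(m)$, giving the stated $O(\log\frac{x_1 - x_0}{\varepsilon})$ bound.

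There is no serious obstacle here; the only subtlety is confirming exhaustiveness of the case analysis in Algorithm~\ref{alg:binary_search}, which is handled above, and recognizing that the existence of a zero of $f'$ in $[a,b]$, combined with Lipschitzness, is what converts a standard bisection depth estimate into the required $|f'(m)| < \varepsilon$ termination guarantee.
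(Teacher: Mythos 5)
Your proposal is correct and follows essentially the same argument as the paper: the invariant $f'(x_0)\le-\varepsilon$, $f'(x_1)>0$ is preserved, the interval halves at each call, and $1$-Lipschitzness of $f'$ forces termination once the interval length drops below $\varepsilon$ (the paper phrases this as the precondition becoming unsatisfiable, you phrase it via the intermediate value theorem giving $|f'(m)|\le b-a$, but these are the same observation). No issues.
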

\begin{proof}
    Since $f$ is $1$-smooth, $f(x_0) \le -\varepsilon$ and $f(x_1) > 0$ cannot hold if $x_1 - x_0 \le \varepsilon$.
    Moreover, each time that \textsc{BinarySearch} fails to find an $\varepsilon$-stationary point for $f$, the length of the interval $[x_0, x_1]$ is cut in half.
    The result follows.
\end{proof}

We also need one lemma about continuous functions on $\R$.

\begin{lemma}\label{lem:analysis}
    Let $g : \R\to\R$ be continuous, let $I$ be a compact and non-empty interval, and let $\varepsilon > 0$.
    Then, there is a finite collection of disjoint closed intervals which cover $I \cap \{g \ge \varepsilon\}$ and which are contained in $I \cap \{g\ge 0\}$.
\end{lemma}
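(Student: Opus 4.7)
The plan is to construct the covering intervals as connected components of the closed set $I \cap \{g \ge 0\}$. Concretely, for each point $x \in I \cap \{g \ge \varepsilon\}$, let $J_x$ denote the connected component of $x$ inside $I \cap \{g \ge 0\}$. Different $J_x$'s are either equal or disjoint, so the distinct $J_x$'s form a pairwise disjoint collection sitting inside $I \cap \{g \ge 0\}$ and covering $I \cap \{g \ge \varepsilon\}$ by construction.

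Next I would verify that each $J_x$ is actually a closed interval in $\R$. Connected subsets of $\R$ are intervals, so $J_x$ is an interval. Since $I \cap \{g \ge 0\}$ is closed in $\R$ (as the intersection of a compact interval with the preimage of the closed set $[0,\infty)$ under the continuous function $g$), and connected components of closed subsets of $\R$ are themselves closed, it follows that $J_x$ is a closed interval.

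The main step, and really the only nontrivial part, is to prove finiteness of the collection $\{J_x : x \in I \cap \{g \ge \varepsilon\}\}$. I would argue by contradiction: suppose there are infinitely many distinct components, so we can pick representatives $x_n \in I \cap \{g \ge \varepsilon\}$ lying in pairwise distinct components. Since $I$ is compact, along a subsequence $x_n \to x^\star \in I$, and by continuity of $g$ we have $g(x^\star) \ge \varepsilon > 0$. Continuity then produces an open neighborhood $U$ of $x^\star$ on which $g > 0$; the set $U \cap I$ is therefore a connected subset of $I \cap \{g \ge 0\}$, and so it lies entirely in the component $J^\star$ of $x^\star$. But for $n$ large enough, $x_n \in U \cap I \subseteq J^\star$, which contradicts that $x_n$ and $x^\star$ were chosen in distinct components.

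The only subtlety I expect is the possibility of degenerate components (single points), but this is ruled out for the components we care about: since $g(x) \ge \varepsilon > 0$, continuity gives a neighborhood of $x$ (intersected with $I$) on which $g > 0$, and this neighborhood already lies in $J_x$, so each $J_x$ is a nondegenerate closed interval. Relabeling the finitely many distinct $J_x$'s as $J_1,\dotsc,J_n$ then yields the collection required by the lemma.
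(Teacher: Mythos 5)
Your proof is correct, but it takes a different route from the paper's. The paper covers the compact set $I \cap \{g \ge \varepsilon\}$ by small closed intervals $I_x \subseteq I \cap \{g \ge 0\}$ (one per point, supplied by continuity), extracts a finite subcover, and takes the connected components of that finite union; finiteness is immediate from the finite subcover. You instead take the intervals to be the connected components of the full closed set $I \cap \{g \ge 0\}$ that meet $\{g \ge \varepsilon\}$, and establish finiteness by a sequential-compactness contradiction: infinitely many distinct components would yield representatives accumulating at some $x^\star$ with $g(x^\star) \ge \varepsilon$, whose positivity neighborhood would force infinitely many of them into a single component. Both arguments are sound and both ultimately rest on continuity plus compactness of $I \cap \{g \ge \varepsilon\}$. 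Your version produces canonical, maximal intervals (which is slightly cleaner conceptually, and your closing remark about degenerate components is harmless but unnecessary, since a singleton is a closed interval and the lemma does not require nondegeneracy); the paper's version is shorter because the finite-subcover step delivers finiteness for free, at the cost of producing intervals that are merely some union of small pieces rather than the maximal components.
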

\begin{proof}
    For each $x \in S \deq I \cap \{g\ge \varepsilon\}$, by continuity of $g$ there exists a closed interval $I_x \subseteq I$ such that $x$ belongs to the interior of $I_x$ and such that $g \ge 0$ on $I_x$.
    The collection ${(I_x)}_{x\in S}$ covers the compact set $S$, so we can extract a finite subcover. The connected components of the union of the finite subcover consist of disjoint closed intervals.
\end{proof}

We are now ready to prove Theorem~\ref{thm:randomized_upper}.

\medskip{}

\begin{proof}[Proof of Theorem~\ref{thm:randomized_upper}]
    Let $x \sim \unif([0, 2/\varepsilon])$.
    If $\abs{f'(x)} < \varepsilon$, then we are done, and if $f'(x) > 0$, then Lemma~\ref{lem:binary_search} shows that \textsc{BinarySearch} terminates with an $\varepsilon$-stationary point of $f$ using $O(\log(1/\varepsilon))$ queries.
    What remains to show is that $x$ satisfies either $\abs{f'(x)} < \varepsilon$ or $f'(x) > 0$ with probability at least $\Omega(\varepsilon)$, which implies that Algorithm~\ref{alg:randomized_upper} succeeds using $O(1/\varepsilon)$ queries with probability at least $1/2$.

    Let $\mf m$ denote the Lebesgue measure restricted to $[0,2/\varepsilon]$.
    Then,
    \begin{align*}
        1
        &\ge f(0) - f(2/\varepsilon)
        = -\int_{[0,2/\varepsilon]} f' \\
        &\ge \varepsilon \, \mf m\{f' \le -\varepsilon\} - \varepsilon \, \mf m\{\abs{f'} < \varepsilon\} - \int_{[0,2/\varepsilon] \cap \{f' \ge \varepsilon\}} f'\,.
    \end{align*}
    From Lemma~\ref{lem:analysis}, we can cover the set $[0,2/\varepsilon] \cap \{f' \ge \varepsilon\}$ with a union of disjoint closed intervals $\bigcup_{k=1}^K I_k \subseteq [0,2/\varepsilon] \cap \{f' \ge 0\}$.
    On $I_k$, the smoothness of $f$ ensures that
    \begin{align*}
        -\int_{I_k} f'
        &\ge -\mf m(I_k) \, \underbrace{f'(\inf I_k)}_{\le \varepsilon} - \int_{I_k} \, (x - \inf I_k) \, \D x
        \ge -\varepsilon \, \mf m(I_k) - \frac{1}{2} \, {\mf m(I_k)}^2\,.
    \end{align*}
    Write $\ell_k \deq \mf m(I_k) = \sup I_k - \inf I_k$.
    Note that $\sum_{k=1}^K \ell_k \le \mf m\{f' \ge 0\}$.
    Thus,
    \begin{align*}
        -\int_{[0,2/\varepsilon] \cap \{f' \ge \varepsilon\}} f'
        &\ge -\varepsilon \sum_{k=1}^K \ell_k - \frac{1}{2} \sum_{k=1}^K \ell_k^2
        \ge -\varepsilon \sum_{k=1}^K \ell_k - \frac{1}{2} \, \Bigl( \sum_{k=1}^K \ell_k \Bigr){\vphantom{\Big|}}^2 \\
        &\ge -\varepsilon \, \mf m\{f'\ge 0\} - \frac{1}{2} \, {\mf m\{f' \ge 0\}}^2\,.
    \end{align*}
    Now suppose that $\mf m\{\abs{f'} < \varepsilon~\text{or}~f' \ge \varepsilon\} \le c_0$, where $c_0 > 0$ is a constant to be chosen later.
    In this case, the inequalities above imply
    \begin{align*}
        1 + 2c_0 \varepsilon + \frac{1}{2} \, c_0^2
        &\ge \varepsilon \,\mf m\{f' \le -\varepsilon\}
        \ge \varepsilon \, \bigl( \frac{2}{\varepsilon}-\mf m\{\abs{f'} < \varepsilon~\text{or}~f' \ge \varepsilon\}\bigr)
    \end{align*}
    which, when rearranged, yields
    \begin{align*}
        1 + 3c_0 \varepsilon + \frac{1}{2}\, c_0^2
        &\ge 2\,.
    \end{align*}
    If $c_0$ is a sufficiently small absolute constant, we arrive at a contradiction.

    We conclude that $\mf m\{\abs{f'} < \varepsilon~\text{or}~f' \ge \varepsilon\} \ge c_0$, which means that the random point $x$ will be good in the sense that either $\abs{f'(x)} < \varepsilon$ or $f'(x) \ge \varepsilon$.
    The probability that Algorithm~\ref{alg:randomized_upper} fails to obtain a good random point in $N$ tries is at most ${(1-c_0 \varepsilon/2)}^N$, which can be made at most $1/2$ by taking $N= \Theta(1/\varepsilon)$.
    We conclude that with probability at least $1/2$, using
    \begin{align*}
        O\bigl(\frac{1}{\varepsilon} + \log \frac{1}{\varepsilon}\bigr)
        = O\bigl( \frac{1}{\varepsilon}\bigr)\quad\text{queries}\,,
    \end{align*}
    Algorithm~\ref{alg:randomized_upper} finds an $\varepsilon$-stationary point.
\end{proof}

\subsection{Proof of Theorem~\ref{thm:det_lower}}\label{scn:pf_det_lower}

\begin{proof}[Proof of Theorem~\ref{thm:det_lower}]
    The goal is to show that when $N \le O(1/\varepsilon^2)$, the resisting oracle construction succeeds, and hence no deterministic algorithm can find an $\varepsilon$-stationary point of an arbitrary $1$-smooth function with objective gap at most $1$ using $N$ queries.

    For the resisting oracle construction, the crux of the matter is to show that $a = f(1/\varepsilon) - f(0) \ge 0$.
    Indeed, if this holds, then since $f$ is clearly bounded below by $0$ on $[0, 1/\varepsilon]$ it will follow that $f\ge 0$ on all of $\R$, and hence $f(0) - \inf f \le 1$.

    Let $I$ be the set of indices $i \in [N]$ for which $\ell_i \ge 8\varepsilon$.
    Since $f$ has slope $-\varepsilon$ on all of the linear pieces, then over all of the linear pieces the value of $f$ drops by at most $1$ on the interval $[0, 1/\varepsilon]$.
    The goal is to show that
    \begin{align*}
        \sum_{i\in I} \{f(x_{i+1}) - f(x_i)\}
        &\overset{!}{\ge} 1\,.
    \end{align*}
    To prove this, write
    \begin{align*}
        \frac{1}{\varepsilon}
        &= \sum_{i=1}^N \ell_i
        = \sum_{i\in I} \ell_i + \sum_{i\in I^\comp} \ell_i
        \le \sum_{i\in I} \ell_i + 8\varepsilon \,\abs{I^\comp}\,.
    \end{align*}
    There are two cases to consider.
    If $\abs{I^\comp} \ge \frac{1}{16\varepsilon^2}$ queries, then we are done, as the algorithm has made $\Omega(1/\varepsilon^2)$ queries.
    Otherwise, $\abs{I^\comp} \le \frac{1}{16\varepsilon^2}$, in which case
    \begin{align*}
        \frac{1}{2\varepsilon}
        &\le \sum_{i\in I} \ell_i\,.
    \end{align*}
    In this second case, we now have
    \begin{align*}
        \sum_{i\in I} \{f(x_{i+1}) - f(x_i)\}
        &= \sum_{i\in I} \Phi_i(x_{i+1})
        = \sum_{i\in I} \ell_i \, \bigl( \frac{\ell_i}{4} - \varepsilon\bigr)
        \ge \frac{1}{8} \sum_{i\in I} \ell_i^2 \\
        &\ge \frac{1}{8\, \abs I}\, \Bigl( \sum_{i\in I} \ell_i \Bigr){\vphantom{\Big|}}^2
        \ge \frac{1}{32\varepsilon^2 \, \abs I}\,.
    \end{align*}
    This is greater than $1$ provided $\abs I \le \frac{1}{32\varepsilon^2}$.

    In summary, the resisting oracle construction is valid provided $\abs I \le \frac{1}{32\varepsilon^2}$ and $\abs{I^\comp} \le \frac{1}{16\varepsilon^2}$.
    Since $\abs I + \abs{I^\comp} = N$, any deterministic algorithm which finds an $\varepsilon$-stationary point must use at least $N \ge \min\{\frac{1}{32\varepsilon^2}, \frac{1}{16\varepsilon^2}\} = \frac{1}{32\varepsilon^2}$ queries, or
    \begin{align*}
        \ms C_{\det}(\varepsilon; \ms O^{1^{\rm st}})
        &\ge \frac{1}{32\varepsilon^2}\,.
    \end{align*}
\end{proof}

\subsection{Proof of Theorem~\ref{thm:rand_lower_0}}\label{scn:pf_rand_lower_0}

\begin{proof}[Proof of Theorem~\ref{thm:rand_lower_0}]
    The proof is very similar to the proof of Theorem~\ref{thm:randomized_lower}.
    We follow the proof up to the point where
    \begin{align*}
        &I\bigl(\ms O_{f_J}(x_i); J \bigm\vert \{x_{i'}, \ms O_{f_J}(x_{i'}) : i' \in [i-1]\}\bigr) \\
        &\qquad \le H\bigl(\ms O_{f_J}(x_i) \bigm\vert \{x_{i'}, \ms O_{f_J}(x_{i'}) : i' \in [i-1]\}\bigr)\,,
    \end{align*}
    where now $\ms O_{f_J}(x) = \{f_J(x), f_J'(x)\}$ returns zeroth- and first-order information.
    The key point now is that since $x_i$ is deterministic (conditioned on previous queries), $\ms O_{f_J}(x_i)$ can only take a constant number of possible values, and so the above entropy term is $O(1)$ (as opposed to Theorem~\ref{thm:randomized_lower}, in which the entropy term was of order $O(\varepsilon \log(1/\varepsilon))$).
    Plugging this into Fano's inequality (Theorem~\ref{thm:fano}), we obtain
    \begin{align*}
        \Pr\{\widehat J \ne J\}
        &\ge 1 - \frac{O(N) + \ln 2}{\ln(1/\varepsilon)} > \frac{1}{2}\,,
    \end{align*}
    provided that $\varepsilon \le \frac{1}{8}$ and $N \le O(\log(1/\varepsilon))$.
    This proves that $\Omega(\log(1/\varepsilon))$ queries to $\ms O^{0^{\rm th}+1^{\rm st}}$ are necessary to find an $\varepsilon$-stationary point, even for a randomized algorithm.
\end{proof}

\subsection{Proof of Theorem~\ref{thm:det_upper_0}}\label{scn:pf_det_upper_0}

We prove the correctness of the algorithms in reverse order, beginning with \textsc{BinarySearchIII}.

\begin{lemma}\label{lem:binary_search_iii}
    Let $f : \R\to\R$ be $1$-smooth.
    Then, \textsc{BinarySearchIII} (Algorithm~\ref{alg:binary_search_iii}) terminates with an $\varepsilon$-stationary point of $f$ using $O(\log \frac{x_+ - x_-}{\varepsilon})$ queries to the oracle.
\end{lemma}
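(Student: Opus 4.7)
The plan is to track the interval $[x_-, x_+]$ across recursive calls and establish three things: (i) the precondition $f'(x_-) \le -\varepsilon$ and $f(x_+) \ge f(x_-)$ is preserved by each recursion, (ii) unless the algorithm has already terminated, the interval length halves at each step yet must remain at least $2\varepsilon$, and (iii) the total oracle cost is $O(\log \tfrac{x_+ - x_-}{\varepsilon})$.

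For (i), I would do a case analysis on the four branches. In the first branch, $\abs{f'(m)} < \varepsilon$ and $m$ is an $\varepsilon$-stationary point, so we are done. In the second branch, $f'(m) > 0$, and we invoke \textsc{BinarySearch} on $[x_-, m]$; its preconditions $f'(x_-) \le -\varepsilon$ and $f'(m) > 0$ are met, so Lemma~\ref{lem:binary_search} guarantees termination with an $\varepsilon$-stationary point using $O(\log \tfrac{m - x_-}{\varepsilon}) = O(\log \tfrac{x_+ - x_-}{\varepsilon})$ queries. In the third branch, $f'(m) \le -\varepsilon$ and $f(m) \ge f(x_-)$, so recursing on $[x_-, m]$ trivially preserves both conditions with $m$ as the new upper endpoint. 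In the final branch, $f'(m) \le -\varepsilon$ and $f(m) < f(x_-) \le f(x_+)$; recursing on $[m, x_+]$ with $m$ as the new lower endpoint then gives $f'(m) \le -\varepsilon$ and $f(x_+) \ge f(m)$, matching the invariant.

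For (ii), which is the main analytic step, let $L \deq x_+ - x_-$ denote the initial length, and suppose the recursion has proceeded for $k$ levels without termination, leaving a current interval of length $L/2^k$ on which the invariant still holds. By $1$-smoothness, $f'(y) \le f'(x_-) + (y - x_-) \le -\varepsilon + (y - x_-)$ for every $y$ in this interval, and integrating yields $f(x_+) - f(x_-) \le -\varepsilon \, L/2^k + \tfrac{1}{2} \, (L/2^k)^2$. Combined with the invariant $f(x_+) \ge f(x_-)$, this forces $L/2^k \ge 2\varepsilon$, and hence $k \le \log_2 \tfrac{L}{2\varepsilon} = O(\log \tfrac{L}{\varepsilon})$.

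For (iii), each non-terminal recursive call performs $O(1)$ oracle queries, so the recursion itself costs $O(\log \tfrac{L}{\varepsilon})$ queries; adding the at most one terminating call to \textsc{BinarySearch}, which costs another $O(\log \tfrac{L}{\varepsilon})$ by Lemma~\ref{lem:binary_search}, gives the claimed bound. The main obstacle is the smoothness-plus-invariant argument in step (ii), which converts the qualitative condition $f(x_+) \ge f(x_-)$ into a quantitative lower bound on the interval length; the rest amounts to routine case analysis and bookkeeping.
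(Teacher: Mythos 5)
Your proposal is correct and follows essentially the same route as the paper: the key step in both is that the invariant $f'(x_-)\le-\varepsilon$ and $f(x_+)\ge f(x_-)$, combined with $1$-smoothness, forces the current interval to have length $\Omega(\varepsilon)$ (the paper notes $f'<0$ throughout once the length drops below $\varepsilon$, while you integrate the smoothness bound to get the threshold $2\varepsilon$ --- the same argument up to a constant). Your explicit verification that each recursive branch preserves the precondition is left implicit in the paper but is a welcome addition.
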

\begin{proof}
    Due to the $1$-smoothness of $f$, if $x_+ - x_- < \varepsilon$, then $f' < 0$ on the interval $[x_-, x_+]$, which contradicts the hypothesis $f(x_+) \ge f(x_-)$.
    Hence, \textsc{BinarySearchIII} can only recursively call itself at most $O(\log \frac{x_+-x_-}{\varepsilon})$ times.
    If it calls \textsc{BinarySearch}, then by Lemma~\ref{lem:binary_search} this only uses an additional $O(\log \frac{x_+ - x_-}{\varepsilon})$ queries.
\end{proof}

\begin{lemma}\label{lem:binary_search_ii}
    Let $f : \R\to\R$ be $1$-smooth.
    Then, \textsc{BinarySearchII} (Algorithm~\ref{alg:binary_search_ii}) terminates with an $\varepsilon$-stationary point of $f$ using $O(\log \frac{x_+ - x_-}{\varepsilon})$ queries to the oracle.
\end{lemma}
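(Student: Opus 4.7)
The plan is to argue by induction on the recursion depth, establishing four things in sequence: the six branches of \textsc{BinarySearchII} form an exhaustive case split, each terminating branch calls a subroutine whose preconditions are verified, each recursive branch preserves the invariant ``$f'(x_-) \le -\varepsilon$ and $0 \le f(x_-) - f(x_+) \le \tfrac{\varepsilon}{4}(x_+-x_-)$'' on a sub-interval of exactly half the width, and the recursion must bottom out once the interval is short.

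First I would verify exhaustion. When the first branch fails we have $|f'(m)| \ge \varepsilon$, and then the failure of the second branch forces $f'(m) \le -\varepsilon$. If the third and fourth branches also fail we have $f(x_+) < f(m) < f(x_-)$, so $(f(x_-) - f(m)) + (f(m) - f(x_+)) = f(x_-) - f(x_+)$ splits a positive quantity into two positive pieces, of which at least one is bounded by half of the sum; thus one of the last two branches must fire. Next I would check preconditions. The \textsc{BinarySearch} branch inherits $f'(x_-) \le -\varepsilon$ and gains $f'(m) > 0$ from its test, matching Lemma~\ref{lem:binary_search}. The two \textsc{BinarySearchIII} branches directly supply the hypothesis ``left slope $\le -\varepsilon$ and right value $\ge$ left value'' (using $f'(m) \le -\varepsilon$ from the earlier exclusions in the second case), so Lemma~\ref{lem:binary_search_iii} applies. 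For the two recursive calls, chaining the branch test with the ambient invariant gives, for instance on $[x_-, m]$, $f(x_-) - f(m) \le \tfrac{1}{2}(f(x_-) - f(x_+)) \le \tfrac{\varepsilon}{8}(x_+-x_-) = \tfrac{\varepsilon}{4}(m - x_-)$, and $f(x_-) - f(m) > 0$ follows from case exhaustion; the right-hand recursion is symmetric and uses $f'(m) \le -\varepsilon$ as the new left-endpoint slope.

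The main obstacle is termination, since the branch logic alone does not force progress. The key claim I would prove is that once $x_+ - x_- \le \varepsilon/2$, neither recursive branch is consistent with the invariant, so the algorithm must exit through one of the first four branches. Indeed, if we were in one of the last two branches then $f'(m) \le -\varepsilon$ and $1$-smoothness yields $f'(y) \le -\varepsilon + (x_+-x_-)/2 \le -3\varepsilon/4$ for every $y \in [m, x_+]$; integrating gives $f(m) - f(x_+) \ge \tfrac{3\varepsilon}{8}(x_+ - x_-)$, which combined with $f(m) - f(x_+) \le f(x_-) - f(x_+)$ contradicts the invariant bound $f(x_-) - f(x_+) \le \tfrac{\varepsilon}{4}(x_+ - x_-)$.

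Putting everything together, the recursion depth of \textsc{BinarySearchII} is $O(\log \tfrac{x_+-x_-}{\varepsilon})$ by halving, each level issues $O(1)$ oracle queries, and the terminal call to \textsc{BinarySearch} or \textsc{BinarySearchIII} adds at most $O(\log \tfrac{x_+-x_-}{\varepsilon})$ more queries by Lemma~\ref{lem:binary_search} and Lemma~\ref{lem:binary_search_iii}. Correctness of the returned point follows by structural induction from those two lemmas together with the explicit test $|f'(m)| < \varepsilon$ in the first branch.
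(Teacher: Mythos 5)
Your proof is correct and follows essentially the same route as the paper's: verify that the recursive calls preserve the precondition on a half-length interval, show the recursion must stop once the interval length reaches $\Theta(\varepsilon)$, and charge $O(\log\frac{x_+-x_-}{\varepsilon})$ for the terminal call to \textsc{BinarySearch} or \textsc{BinarySearchIII}. Your termination step (a midpoint with $f'(m)\le-\varepsilon$ on a short interval forces a drop exceeding $\frac{\varepsilon}{4}(x_+-x_-)$) is a minor rephrasing of the paper's argument that the precondition is incompatible with $x_+-x_-<\varepsilon/2$, and your explicit check that the six branches are exhaustive is a welcome detail the paper leaves implicit.
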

\begin{proof}
    First, we check that when \textsc{BinarySearchII} calls itself, the preconditions of \textsc{BinarySearchII} continue to be met.
    Suppose for instance that $0 \le f(x_-) - f(m) \le \frac{1}{2} \, (f(x_-) - f(x_+))$.
    Since $0 \le f(x_-) - f(x_+) \le \frac{\varepsilon}{4} \, (x_+ - x_-)$ by hypothesis, then
    \begin{align*}
        0
        &\le f(x_-) - f(m)
        \le \frac{\varepsilon}{8} \, (x_+ - x_-)
        = \frac{\varepsilon}{4} \, (x_- - m)\,,
    \end{align*}
    which is what we wanted to show.
    The other case is similar.

    Next, we argue that \textsc{BinarySearchII} terminates.
    The hypotheses of \textsc{BinarySearchII} imply that there is an $\varepsilon/2$-stationary point in the interval $[x_-,x_+]$.
    Indeed, if this were not the case, then $f' \le -\varepsilon/2$ on the entire interval, so $f(x_+) = f(x_-) + \int_{[x_-,x_+]} f' \le f(x_-) - \frac{\varepsilon}{2} \, (x_+ - x_-)$, but this contradicts the assumption $f(x_-) - f(x_+) \le \frac{\varepsilon}{4} \, (x_+ - x_-)$.
    Therefore, if $x_+ - x_- < \frac{\varepsilon}{2}$, it would follow that $f'(x_-) > -\varepsilon$, which contradicts the hypothesis $f'(x_-) \le -\varepsilon$.
    Since the value of $x_+ - x_-$ is cut in half each time that \textsc{BinarySearchII} calls itself, we conclude that this can happen at most $O(\log \frac{x_+ - x_-}{\varepsilon})$ times.
    If \textsc{BinarySearchII} calls either \textsc{BinarySearch} or \textsc{BinarySearchIII}, then by Lemma~\ref{lem:binary_search} and Lemma~\ref{lem:binary_search_iii}, this uses at most an additional $O(\log \frac{x_+ - x_-}{\varepsilon})$ queries to the oracle.
\end{proof}

\begin{lemma}\label{lem:decrease_gap}
    Let $f : \R\to\R$ be $1$-smooth.
    Then, \textsc{DecreaseGap} (Algorithm~\ref{alg:decr_gap}) terminates, either with an $\varepsilon$-stationary point of $f$, or with a point $x$ such that $f(x) \le f(x_0)$ and $f(x+2/\varepsilon) \ge \frac{3}{4} \, f(x)$, using $O(\log \frac{1}{\varepsilon})$ queries to the oracle.
\end{lemma}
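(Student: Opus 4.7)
The plan is to introduce notation for the sequence of arguments in the tail recursion, establish two invariants along this sequence, and then combine a smoothness-based lower bound on $f$ with the geometric decay forced by the recursive branch to bound the depth by $O(\log(1/\varepsilon))$.

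Write $y_0 \deq x_0$ and, for $k \ge 1$, $y_k \deq y_0 + 2k/\varepsilon$, the argument to the $k$-th recursive invocation of \textsc{DecreaseGap}. I would first establish two invariants: (i) whenever the recursion reaches $y_k$, we have $f'(y_k) \le -\varepsilon$; and (ii) whenever the recursion passes from $y_k$ to $y_{k+1}$, we have $f(y_{k+1}) < \tfrac{3}{4}\, f(y_k)$, so that inductively $f(y_k) \le (3/4)^k\, f(y_0) \le (3/4)^k$, using $f(y_0) = f(0) = 1$ at the top-level call from \textsc{ZerothOrder}. Invariant (i) holds at $k=0$ by the standing assumption $f'(0)\le -\varepsilon$, and is preserved because the recursive branch is only entered when $f'(y_{k+1}) \le -\varepsilon$; otherwise one of the preceding branches would have fired. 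Invariant (ii) is immediate from the branch condition.

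The crux of the argument is a smoothness-based lower bound on $f(y_k)$. Using Invariant (i) and the fact that $f'$ is $1$-Lipschitz, $f'(y_k + t) \le -\varepsilon + t$ for $t \ge 0$, so integrating on $[0,\varepsilon]$ gives $f(y_k + \varepsilon) \le f(y_k) - \varepsilon^2/2$. Since $f \ge 0$, this forces $f(y_k) \ge \varepsilon^2/2$. Combined with Invariant (ii), one obtains $(3/4)^k \ge f(y_k) \ge \varepsilon^2/2$, so the recursion depth is at most $k \le \log_{4/3}(2/\varepsilon^2) = O(\log(1/\varepsilon))$.

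To conclude, each recursive level issues a single oracle query, and the only other subroutine invoked is \textsc{BinarySearch}, which by Lemma~\ref{lem:binary_search} uses $O(\log(1/\varepsilon))$ queries on an interval of length $2/\varepsilon$; hence the total query count is $O(\log(1/\varepsilon))$. Correctness of the three exit branches is routine bookkeeping: the first branch outputs a point with $|f'| < \varepsilon$; the second defers to \textsc{BinarySearch} under its preconditions $f'(y_k) \le -\varepsilon$ and $f'(y_k+2/\varepsilon) > 0$; and the third outputs $y_k$, which satisfies $f(y_k) \le f(x_0)$ by Invariant (ii), $f'(y_k) \le -\varepsilon$ by Invariant (i), and $f(y_k+2/\varepsilon) \ge \tfrac{3}{4}\, f(y_k)$ by the branch condition. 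The only nontrivial step I expect to require care is the $\varepsilon^2/2$ lower bound, but this is a one-line consequence of smoothness and nonnegativity.
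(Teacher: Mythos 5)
Your proof is correct and follows essentially the same route as the paper: the recursion depth is bounded by combining the geometric decay $f(y_{k+1}) < \tfrac{3}{4}f(y_k)$ with the lower bound $f(y_k) \ge \varepsilon^2/2$, which the paper obtains from the descent lemma~\eqref{eq:descent_lemma} and you obtain by directly integrating the Lipschitz bound on $f'$ --- the same one-line consequence of smoothness and nonnegativity.
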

\begin{proof}
    Each time \textsc{DecreaseGap} calls itself, the value of $f(x_0)$ decreases by a factor of $\frac{3}{4}$.
    If $f'(x_0) \le -\varepsilon$, then from~\eqref{eq:descent_lemma} we deduce that $f(x_0) \ge \frac{1}{2} \, \abs{f'(x_0)}^2 \ge \varepsilon^2/2$.
    Hence, \textsc{DecreaseGap} can call itself at most $O(\log \frac{1}{\varepsilon^2}) = O(\log \frac{1}{\varepsilon})$ times.
    If it calls \textsc{BinarySearch}, then by Lemma~\ref{lem:binary_search} this uses an additional $O(\log \frac{1}{\varepsilon})$ queries to the oracle.
\end{proof}

Finally, we are ready to verify the correctness of \textsc{ZerothOrder} (Algorithm~\ref{alg:0}).

\medskip{}

\begin{proof}[Proof of Theorem~\ref{thm:det_upper_0}]
    From Lemma~\ref{lem:decrease_gap}, if $\abs{f'(x_-)} > \varepsilon$ then we must have $f'(x_-) \le -\varepsilon$ and $f(x_+) \ge \frac{3}{4} \, f(x_-)$.
    There are two cases.
    If $f(x_+) \le f(x_-)$, then we know that
    \begin{align*}
        0 \le f(x_-) - f(x_+) \le \frac{1}{4} \, f(x_-)
        \le \frac{1}{4}
        = \frac{\varepsilon}{8} \, (x_+ - x_-)
    \end{align*}
    so the preconditions of \textsc{BinarySearchII} are met; by Lemma~\ref{lem:binary_search_ii}, \textsc{ZerothOrder} terminates with an $\varepsilon$-stationary point of $f$ using $O(\log \frac{1}{\varepsilon})$ additional queries.
    In the other case $f(x_+) \ge f(x_-)$, by Lemma~\ref{lem:binary_search_iii}, \textsc{ZerothOrder} again terminates with an $\varepsilon$-stationary point of $f$ using $O(\log \frac{1}{\varepsilon})$ additional queries.
    This concludes the proof.
\end{proof}

\end{document}